\newtheorem{algo}{Algorithm}
\newtheorem{rem}{Remark}[section]
\title{A weighted global GMRES algorithm with deflation for solving large Sylvester matrix equations}
\author{Najmeh Azizi Zadeh \thanks{Department of Applied Mathematics, Faculty of Mathematics $\&$ Computer Sciences, Shahid Bahonar University of Kerman, Kerman, Iran, ({\tt nazizizadeh@math.uk.ac.ir}).} \and Azita Tajaddini\thanks{Department of Applied Mathematics, Faculty of Mathematics $\&$ Computer Sciences, Shahid Bahonar University of Kerman, Kerman, Iran,
({\tt atajadini@uk.ac.ir}).} \and Gang Wu\thanks{Corresponding author. School of Mathematics, China University of Mining and Technology, Xuzhou, 221116, Jiangsu, P.R. China ({\tt gangwu76@126.com, gangwu@cumt.edu.cn}). This author is supported by the National Science Fundation of China under grant 11371176, the Natural Science Foundation of Jiangsu
Province, and the Talent Introduction Program of China
University of Mining and Technology.}}
\begin{document}

\maketitle

\begin{abstract}
The solution of large scale Sylvester matrix equation plays an important role in control and large scientific computations. A popular approach is to use the global GMRES algorithm. In this work, we first consider the global GMRES algorithm with weighting strategy, and propose some
new schemes based on residual to update the weighting matrix.
Due to the growth of memory requirements and computational
cost, it is necessary to restart the algorithm efficiently. The deflation strategy is popular
for the solution of large linear systems and large eigenvalue problems, to the best of our knowledge, little work is done
on applying deflation to the global GMRES algorithm for large Sylvester matrix equations.
We then consider how to combine the weighting strategy with deflated restarting, and propose a weighted global GMRES algorithm with deflation for solving large Sylvester matrix equations. Theoretical analysis is given to show why the new algorithm works effectively. Further, unlike the weighted GMRES-DR presented in [{\sc M. Embree, R. B. Morgan and H. V. Nguyen},
{\em Weighted inner products for GMRES and GMRES-DR},
 (2017), arXiv:1607.00255v2], we show that in our new algorithm, there is no need to change the inner product with respect to diagonal matrix to that with non-diagonal matrix, and our scheme is much cheaper. Numerical examples illustrate the numerical behavior of the proposed algorithms.
\end{abstract}

\begin{keywords}
Large Sylvester matrix equation, Global GMRES, Weighting strategy, Deflation, Krylov subspace.
\end{keywords}

\begin{AMS}
65F10, 15A24, 65F08, 65F30
\end{AMS}

\pagestyle{myheadings}
\thispagestyle{plain}
\markboth{N. Azizi Zadeh, A. Tajaddini, and G. Wu}{\sc Weighted Global GMRES Algorithm with Deflation}

\section{Introduction}
Consider the large Sylvester matrix equation
\begin{equation}
AX+XB=C, \label{1.1}
\end{equation}
where $A \in \mathbb{R}^{n \times n}, B \in \mathbb{R}^{s \times s}$, $C \in \mathbb{R}^{n \times s}$ and $X \in \mathbb{R}^{n\times s}$, with $s\ll n$.
If we define the operator $\mathcal{A}$ as
\begin{eqnarray}\label{122}
\mathcal{A}&: & \ \mathbb{R}^{n\times s} \longrightarrow \mathbb{R}^{n\times s},\nonumber\\
X\rightarrow& & \mathcal{A}X= AX+XB.
\end{eqnarray}
Then the Sylvester matrix equation (\ref{1.1}) can be written as
\begin{equation}\label{1.2}
\mathcal{A}X=C.
\end{equation}
The Sylvester matrix equation (\ref{1.1}) plays an important role in control and communications theory, model reduction, image restoration, signal processing, filtering, decoupling techniques for ordinary partial differential equations, as well as the implementation of implicit numerical methods for ordinary differential equations; see \cite{Benner, Calvetti, Datta, Dattakn, Guennouni, Heyouni, Simonciniv}, and the references there in.
Also, (\ref{1.2}) can be rewritten as the following large linear system
\begin{equation}\label{1.3}
\big(I_s\otimes A+B^T\otimes I_n\big)vec(X)=vec(C),
\end{equation}
where $\otimes$ denotes the Kronecker product operator, and $vec(X)$ denotes the vectorize operator defined as (in MATLAB notation)
$$
vec(X)=[X(:,1); X(:,2); \ldots; X(:,s)],
$$
and $X(:, i)$ is the $i$-th column of the matrix $X\in \mathbb{R}^{n\times s}$. The linear equation systems (\ref{1.3}) have unique solution if and only if the matrix $(I_s\otimes A)+(B^T\otimes I_n)$ is nonsingular. Throughout this paper, we assume that the system (\ref{1.3}) has a unique solution. However, the size of the linear equation systems (\ref{1.3}) would be very huge. Therefore, we apply some iterative algorithms for solving (\ref{1.1}) instead of (\ref{1.3}).

There are some iterative algorithms based on the block or the matrix Krylov solvers for the solution of the Sylvester matrix equations, see, e.g. \cite{Agoujil, Dehghan, Guennouni, Heyouni, Jaimoukha, Riquet, Khorsand, Simoncinilin, Panjehali, Simoncini, Simonciniv}. The main idea behind these algorithms is to exploit the global or block (extended) Arnoldi process to construct $F$-orthonormal or orthonormal bases for the matrix or block Krylov subspaces, respectively, and then apply some projection techniques to extract approximations.

In \cite{Essai}, Essai introduced a weighted Arnoldi process for solving large linear systems. The idea is to improve and accelerate the convergence rate of the standard algorithm by constructing a $D$-orthonormal basis for the Krylov subspace, where $D$ is called the weighting matrix and is generally a positive diagonal matrix. According to \cite{Embree,Guttel}, weighting strategy can improve the algorithm by alienating the eigenvalues that obstacle the convergence. The weighting strategy has been successfully developed for solving linear systems \cite{Heyouniessai,Saberinajafi}, matrix equations \cite{Panjeh,Panjehbeik}, and large eigenvalue problems \cite{Zhong}. For example,
Mohseni Moghadam {\it et al.} \cite{Panjeh} presented a weighted global FOM method for solving nonsymmetric linear systems. They used the Schur complement formula and a new matrix product, and gave some theoretical results to show rationality of the proposed algorithm. In \cite{Panjehbeik}, Panjeh Ali Beik {\it et al.} proposed weighted global FOM and weighted global GMRES algorithms for solving the general coupled linear matrix equations.

For the sake of the growth of memory requirements and computational
cost, the global Krylov subspace algorithms will become
impractical as the step of the global Arnoldi process proceeds.
For Krylov subspace algorithms,
one remedy is to use some
restarting strategies \cite{Saad}.
A popular restarting strategy is the
deflated restarting (also refer to as thick-restarting or deflation) strategy advocated in \cite{Jiang,Morganr,Morgan,Mor,Wu,KWu,Zhong}, in which the approximate eigenvectors are put
firstly in the search subspace. Here ``deflated restarting" (or deflation) means computing some approximate eigenvectors corresponding
to some eigenvalues, and using them to
``deflate" these eigenvalues from the spectrum of the matrix, to speed up the convergence of the iterative algorithm. The deflation strategy is popular
for the solution of large linear systems \cite{Morganr,Morgan} and large eigenvalue problems \cite{Jiang,Mor,Wu,KWu,Zhong}, to the best of our knowledge, little work is done
on applying the deflated restarting strategy on the global GMRES algorithm for large Sylvester matrix equations.

In this paper, we try to fill in this gap. As was pointed out in \cite{Embree,Essai,Guttel}, the optimal choice of the weighting matrix $D$ in the weighted approaches is still an open problem and needs further investigation. We first apply the weighting strategy to the global GMRES algorithm, and present three new schemes to update the weighting matrix at each restart.
To accelerate the convergence of the weighted global GMRES algorithm, we consider how to knit the deflation strategy together with it, and the key is that the Sylvester matrix equation can be rewritten as a linear system of the form (\ref{1.2}) theoretically. The new algorithm can be understood as applying the deflation technique to remove some small eigenvalues of the matrix $(I_s\otimes A+B^T\otimes I_n)$ at each restart. Theoretical results and numerical experiments show that the weighting strategy with deflation can produce iterations that give faster convergence than the conventional global GMRES algorithm, and a combination of these two strategies is more efficient and robust than its two original counterparts.

This paper is organized as follows. After presenting the weighted global GMRES algorithm for the solution of Sylvester matrix equations in section 2, the deflated version of this algorithm is established in section 3.
Some numerical experiments confirm the superiority of our new algorithm over the conventional ones in section 4.


\section{A weighted global GMRES algorithm for large Sylvester matrix equations}\label{sec2}
In this section, we recall some notations and definitions that will be used in this paper, and briefly introduce the weighted global Arnoldi process as well as the weighted global GMRES algorithm. Specifically, we propose three new schemes based on residual to update the weighting matrix during iterations.



The global generalized minimal residual (GLGMRES) algorithm is well-known for solving linear systems with multiple right-hand sides and for matrix equations \cite{Jbilou,Panjehali}, which is an oblique projection method based on matrix Krylov subspace.
Let us introduce the {\it weighted} global GMRES algorithm for Sylvester matrix equations. Let $D=diag( d_1,d_2, \ldots, d_n)$ be a diagonal matrix with $d_i>0,~i=1, 2,\ldots,n$, and let ${\bf u}, {\bf v} \in \mathbb{R}^{n}$ be given, then the $D$-inner product with respect to two vectors is defined as \cite{Zhong}
\begin{eqnarray*}
({\bf u}, {\bf v})_D={\bf v}^{T}D{\bf u} &=\sum_{i=1}^{n}d_i{\bf u}_i{\bf v}_j,
\end{eqnarray*}
and the associated $D$-norm of $\bf u$ is defined as
\begin{eqnarray*}
\Vert {\bf u}\Vert_D & =\sqrt{({\bf u},{\bf u})_D}, \quad \forall {\bf u}\in \mathbb{R}^{n}.
\end{eqnarray*}
For two matrices $Y, Z\in \mathbb{R}^{n\times s}$, the $D$-inner product is defined as \cite{Panjeh}
\begin{eqnarray*}
(Y,Z)_D= & trace(Z^{T}DY),
\end{eqnarray*}
where $trace(\cdot)$ denotes the trace of a matrix, and $Z^T$ represents the transpose of the matrix $Z$.
It can be verified that \cite{Panjeh}
$$
(Y,Z)_D=\big(vec(Y),vec(Z)\big)_{I_s \otimes D}.
$$
Also, the $D$-norm associated with this inner product is
\begin{eqnarray*}
\Vert Y \Vert_D & =\sqrt{(Y,Y)_D}, \quad \forall  \ Y\in \mathbb{R}^{n\times n}.
\end{eqnarray*}
Next we introduce a useful product that will be used latter:
\begin{definition} \cite{Panjeh}\label{def1}
Let $A=[A_1,A_2,\ldots, A_p]$ and $B=[B_1,B_2,\ldots,B_l]$, where $A_i, B_j \in \mathbb{R}^{n\times s},~i=1,2,\ldots,p,~j=1,2,\ldots,l$. Then elements of the matrix $A^{T} \diamond_D B$ is defined as
\begin{eqnarray}\label{2.1}
(A^{T} \diamond_D B)_{ij}=(A_i, B_j)_D, \quad i=1, 2, \ldots, p,~j=1, 2, \ldots, l.
\end{eqnarray}
\end{definition}

It was shown that $A^{T} \diamond_D B=A^{T} \diamond (D B)$ \cite{Panjeh}.
Let $V\in \mathbb{R}^{n\times s}$ be an initial block vector that is $D$-orthogonal, that is, orthonormal with respect to the $\diamond_D$-inner product. The following algorithm presents an $m$-step weighted global Arnoldi process \cite{Panjeh}.
\medskip

\begin{algo}
{\textit{ The $m$-step weighted global Arnoldi process}}
\label{alg1}
\begin{enumerate}
  \item \textbf{Input:} $A \in \mathbb{R}^{n\times n}$, $B \in \mathbb{R}^{s\times s}$, $V \in \mathbb{R}^{n\times s}$, a positive diagonal matrix $D$ and an  integer number $m>0$.
  \item \textbf{Output:} $\mathcal{V}_m=[V_1, V_2, \ldots,V_m], \bar{H}_m=[h_{i,j}].$
  \item Compute $\beta=\Arrowvert V\Arrowvert_{D}$, $V_{1}=V/\beta.$
  \item for {$j=1,2,\ldots, m$ }
  \item ~~Compute $W=\mathcal{A}V_j=AV_{j}+V_jB.$
  \item ~~for {$i=1,2,\ldots,j$}
  \item ~~~~$h_{ij}=(W, V_{i})_{D}.$
  \item ~~~~$W=W-h_{ij}V_{i}.$
  \item ~~end
  \item ~~Compute  $h_{j+1, j}=\Arrowvert W\Arrowvert_{D}$. If $h_{j+1,j}=0$ then stop.
  \item ~~Compute $V_{j+1}=W/h_{j+1, j}.$
  \item end
\end{enumerate}
\end{algo}
\medskip
The weighted global Arnoldi process constructs a $D$-orthonormal basis $\mathcal{V}_{m}=[V_1,V_2,\ldots ,V_{m}]\in \mathbb{R}^{n\times ms}$, i.e.,
\begin{eqnarray*}
(V_i, V_{j})_D & =\delta_{ij}, \quad \forall i,j=1,2, \ldots, m,
\end{eqnarray*}
for the matrix Krylov subspace
\begin{eqnarray*}
\mathcal{K}_m (\mathcal{A}, V_1) &:=& span\lbrace V_1, \mathcal{A}V_1, \ldots, \mathcal{A}^{m-1}V_1 \rbrace \\
&=&\Big \lbrace \sum_{i=1}^{m-1}\alpha_{i}\mathcal{A}^{i-1}V_1 \vert \alpha_{i} \in \mathbb{R}, i=1,2, \ldots, m-1 \Big \rbrace.
\end{eqnarray*}

Let $\bar{H}_m=\Big [\begin{array}{c}
                                                                                                    H_m \\
                                                                                                    h_{m+1,m}e_m^T
                                                                                                  \end{array} \Big ]
 \in \mathbb{R}^{(m+1)\times m}$ be a quasi-upper Hessenberg matrix whose nonzeros entries $h_{ij}$ are defined by Algorithm \ref{alg1}, and the matrix $H_m$ is obtained from the matrix $\bar{H}_m$ by deleting its last row. Note that the matrix $\mathcal{V}_{m}$ is $D$-orthogonal.
With the help of Definition \ref{def1}, we obtain the following relations
\begin{eqnarray}\label{2.2}
[\mathcal{A}V_1, \mathcal{A}V_2,\ldots, \mathcal{A}V_m]&=&\mathcal{V}_{m}(H_m\otimes I_s)+h_{m+1,m} V_{m+1} ({\bf e}_m^{T}\otimes I_s),\\ \label{2.3}
&=&\mathcal{V}_{m+1}(\bar{H}_m\otimes I_s),\\ \label{2.4}
\bar{H}_m&=&\mathcal{V}_{m+1}^{T}\diamond_{D} \mathcal{A}\mathcal{V}_m,
\end{eqnarray}
where $\mathcal{V}_{m+1}=[V_1,V_2,\ldots ,V_{m+1}]\in \mathbb{R}^{n\times (m+1)s}$. Define
\begin{equation}\label{255}
\mathcal{A}\mathcal{V}_m\equiv[\mathcal{A}V_1, \mathcal{A}V_2,\ldots, \mathcal{A}V_m],
\end{equation}
then (\ref{2.2}) can be rewritten as
\begin{eqnarray}\label{2.5}
\mathcal{A}\mathcal{V}_m=\mathcal{V}_{m+1}(\bar{H}_m\otimes I_s).
\end{eqnarray}

We are in a position to consider the weighted global GMRES algorithm for solving (\ref{1.1}). Let $X_0 \in \mathbb{R}^{n\times s} $ be the initial guess, and the initial residual be
$R_0=C-AX_0-X_0B$.
In the weighted global GMRES algorithm, we construct an approximate solution of the form
\begin{equation}\label{2.8}
X_m=X_0+\mathcal{V}_m({\bf y}^{w}_m\otimes I_s),
\end{equation}
where ${\bf y}^{w}_m \in \mathbb{R}^m$. The corresponding residual is
\begin{eqnarray}\nonumber
R_m&=&C-AX_m-X_mB\\ \nonumber
&=&(C-AX_0-X_0B)-\big(A\mathcal{V}_m({\bf y}^{w}_m\otimes I_s)+\mathcal{V}_m({\bf y}^{w}_m\otimes I_s)B\big),\\
&=&R_0-\mathcal{A}\mathcal{V}_m({\bf y}^{w}_m\otimes I_s), \label{2.9}
\end{eqnarray}
here we used (\ref{122}) and (\ref{255}). Substituting (\ref{2.3}) into (\ref{2.9}), we arrive at
\begin{eqnarray}\nonumber
R_m&=&\beta V_1-\mathcal{V}_{m+1}(\bar{H}_m\otimes I_s)({\bf y}^{w}_m\otimes I_s),\\
&=&\mathcal{V}_{m+1}\big((\beta {\bf e}_{1}-\bar{H}_m{\bf y}^{w}_m) \otimes I_s\big), \label{2.10}
\end{eqnarray}
where ${\bf e}_1$ is the first canonical basis vector in $\mathbb{R}^{m+1}$. Note that the residual is $D$-orthogonal to $\mathcal{A}\mathcal{K}_m(\mathcal{A},R_0)$, i.e.,
\begin{equation}\label{2.7}
R_m=C-\mathcal{A}X_m\perp_D \mathcal{A}\mathcal{K}_m(\mathcal{A},R_0),
\end{equation}
where $\mathcal{A}\mathcal{K}_m(\mathcal{A},R_0)=span\lbrace \mathcal{A}R_0, \ldots, \mathcal{A}^{m}R_0 \rbrace$, and ``$\perp_D$" means orthgonal with respect to the ``$\diamond_D$" inner product.
%

In order to compute ${\bf y}^{w}_m$, we have from (\ref{2.10}) and (\ref{2.7}) that
\begin{eqnarray*}
\min_{\bf y} \Vert R_m\Vert_D&=&\min_{\bf y} \Vert \mathcal{V}_{m+1}((\beta {\bf e}_{1}-\bar{H}_m{\bf y}) \otimes I_s)\Vert_D\\
&=&\min_{\bf y} \Vert (\beta {\bf e}_{1}-\bar{H}_m{\bf y})^T(\mathcal{V}_{m+1})^TD(\mathcal{V}_{m+1})(\beta {\bf e}_{1}-\bar{H}_m{\bf y})\Vert_2 \\
&=&\min_{\bf y} \Vert \beta {\bf e}_{1}-\bar{H}_m{\bf y}\Vert_2,
\end{eqnarray*}
where we used $\mathcal{V}_{m+1}^{T}\diamond_D \mathcal{V}_{m+1}=I_{m+1}$. Thus,
\begin{equation}\label{2.11}
{\bf y}^{w}_m={ \rm arg min}_{{\bf y}^{w}\in \mathbb{R}^{m}}\Vert \beta {\bf e}_1 -\bar{H}_m {\bf y} ^{w}\Vert_2,
\end{equation}
or equivalently,
\begin{equation}\label{2.12}
\bar{H}_m^{T}\bar{H}_m{\bf y}^{w}_m=\bar{H}_m^{T}\beta{\bf e}_1.
\end{equation}

As was pointed out in \cite{Essai,Heyouniessai,Zhong}, the optimal choice of $D$ in the weighted approaches is still an open problem and needs further investigation.
Some choices for the weighting matrix have been considered in, say, \cite{Heyouniessai, Panjehbeik, Saberinajafi}.
Also, to speed up the convergence rate, it was suggested to use a weighted inner product that changes at each restart \cite{Embree,Guttel}.
In this section, we propose three choices based on the residual $R_m$, which could be updated during iterations:
\medskip
\begin{description}
  \item[Option 1:] Let $\Vert R(:,t)\Vert_2=\max \lbrace \Vert R_m(:,i)\Vert_2, ~  1\leq i\leq s \rbrace$,
  where $R_m(:,t)$ is the $t$-th column of residual matrix $R_m$. Then we define
    $D_1=diag\left(\frac{\vert R(:,t)\vert}{\Vert R(:,t)\Vert_2}\right)$, where $\vert R(:,t)\vert$ stands for the absolute value of $R(:,t)$.
  \item[Option 2:] Similarly, let $\Vert R(:,t)\Vert_2=\min \lbrace \Vert R_m(:,i)\Vert_2,~1\leq i\leq s \rbrace$, then we define $D_2=diag\left(\frac{\vert R(:,t)\vert}{\Vert R(:,t)\Vert_2}\right)$.
  \item[Option 3:] Use the mean of the block residual $R_m$, i.e, $ D_3=diag\left(\vert(\frac{\sum_{i=1}^s R_m(:,i)}{s})\vert\right)$.
\end{description}

Combining these weighting strategies with Algorithm \ref{alg1}, we have the following algorithm.

\begin{algo}
{\textit{ A restarted weighted global GMRES algorithm for large Sylvester matrix equations\quad (W-GLGMRES)}}
\label{alg2}
\begin{enumerate}
  \item \textbf{Input:} $A\in \mathbb{R}^{n\times n}$, $B \in \mathbb{R}^{s\times s}$, $C \in \mathbb{R}^{n\times s}$. Choose the initial guess, $X_0 \in \mathbb{R}^{n\times s}$, an initial positive diagonal matrix D and an integer $m>0$, and the tolerance $tol>0$.
  \item \textbf{Output:} The approximation $X_m$.
  \item Compute $R_0=C-AX_0-X_0B$ and $\beta=\Arrowvert R_0\Arrowvert_{D}$, $V=R_0/\beta.$
  \item Run Algorithm \ref{alg1} with the initial block vector $V$ to obtain the matrices $\mathcal{V}_m, \bar{H}_m$.
  \item Solving $\min_{{\bf y}^{w}\in \mathbb{R}^{m}} \Vert \beta e_1 -\bar{H}_m{\bf y}^{w}\Vert_2$ for ${\bf y}^{w}_{m}$.
  \item Compute $X_m=X_0+\mathcal{V}_m({\bf y}^w_m\otimes I_s)$ and $R_m=C-AX_m-X_mB$. If $\Vert R_m\Vert_D< tol$, then stop, else continue.
  \item Set $X_0=X_m$ and update the weighting matrix $D$ according to Options 1--3, and go to Step 3.
\end{enumerate}
\end{algo}
\medskip
\begin{rem}
As was mentioned before, the Sylvester matrix equation (\ref{1.1}) can be reformulated as the linear system (\ref{1.3}). Thus, the three choices of $\{D_j\}_{j=1}^3$ for (\ref{1.1}) can be understood as
the weighted GMRES algorithm with the weighting matrices
\begin{eqnarray*}
\widehat{D}_j=I_s \otimes D_j, \  j=1,2,3,
\end{eqnarray*}
respectively, for solving the linear system (\ref{1.3}). The theoretical results and discussions given in \cite{Embree, Guttel} on weighted GMRES for large linear systems apply here  trivially, and one refers to \cite{Embree, Guttel} for why the weighted strategy can speed up the computation. This also interprets why the weighted strategy can improve the numerical performance of the standard global GMRES; see the numerical experiments made in Section \ref{sec4}.
\end{rem}
\section{A weighted global GMRES with deflation for large Sylvester matrix equations}\label{sec3}
In this section, we speed up the weighted global GMRES algorithm by using
the deflated restarting strategy that is popular for large linear systems and large eigenvalue problems \cite{Jiang,Morganr,Morgan,Mor,KWu,Zhong}.
In the first cycle of the weighted global GMRES algorithm with deflation, the standard weighted global GMRES algorithm is run. To apply the deflated restarting strategy,
we need to compute $k~(1\leq k\leq m)$ weighted harmonic Ritz pairs. Let $\mathcal{V}_m$ be the $D$-orthonormal basis obtained from the ``previous" cycle, we seek $k$ weighted harmonic Ritz pairs $(\theta_i,Y_i)$ that satisfy
\begin{equation}\label{3.1}
\mathcal{A}\mathcal{V}_mY_i-\theta_i\mathcal{V}_m Y_i\perp_D (\mathcal{A}-\sigma I)\mathcal{K}_m(\mathcal{A},V),\quad i=1,\ldots,k,
\end{equation}
where
$$
\mathcal{A}\mathcal{V}_m=[\mathcal{A}V_1, \mathcal{A}V_2,\ldots, \mathcal{A}V_m]=[AV_1+V_1B,\ldots,AV_m+V_mB],
$$
and $Y_i={\bf g}_i\otimes I_s \in \mathbb{C}^{ms\times s}$ with ${\bf g}_i \in \mathbb{C}^{m\times 1}$.
In this work, we want to deflate some smallest eigenvalues in magnitude, and a shift $\sigma=0$ is used throughout this paper.

From (\ref{2.5}), we have that
\begin{eqnarray*}\nonumber
\mathcal{A}\mathcal{V}_m({\bf g}_i\otimes I_s)-\theta_i \mathcal{V}_m({\bf g}_i\otimes I_s) &=& (\mathcal{A}\mathcal{V}_m-\theta_i \mathcal{V}_m)({\bf g}_i\otimes I_s)\\
 & =&\mathcal{V}_{m+1} \left((\bar{H}_m-\theta_i\bar{I}_m)\otimes I_s \right)({\bf g}_i\otimes I_s), \label{3.2}
\end{eqnarray*}
where $\bar{I}_m=\left [\begin{array}{c}
I_m \\
0
\end{array}\right ].$
By (\ref{3.1}) and Definition \ref{def1}, we can compute $(\theta_i,{\bf g}_i)$ via solving the following (small-sized) generalized eigenvalue problem
\begin{equation}\label{3.3}
\theta_i\big((\mathcal{A}\mathcal{V}_m)^T \diamond_{D} \mathcal{V}_m\big){\bf g}_i=(\mathcal{A}\mathcal{V}_m)^T \diamond_{D} (\mathcal{A}\mathcal{V}_m){\bf g}_i.
\end{equation}
From (\ref{2.5}) and the fact that $\mathcal{V}_{m+1}^{T}\diamond_{D}\mathcal{V}_{m+1}=I_{m+1}$, we rewrite (\ref{3.3}) as
\begin{equation}\label{3.4}
\theta_iH_m^T {\bf g}_i=\bar{H}_m^T\bar{H}_m{\bf g}_i.
\end{equation}
If $H_m$ is nonsingular, (\ref{3.4}) is equivalent to
\begin{equation}\label{3.5}
(H_m+h_{m+1,m}^2H_m^{-T}{\bf e}_m{\bf e}_m^T){\bf g}_i=\theta_i{\bf g}_i.
\end{equation}

Then we define the ``weighted harmonic Ritz vector" as $Y_i={\bf g}_i\otimes I_s$, and the corresponding harmonic residual is
\begin{eqnarray*}
\widetilde{R}_i=\mathcal{A}\mathcal{V}_mY_i - \theta_i\mathcal{V}_mY_i&=&\mathcal{V}_{m+1}\big((\bar{H}_m-\theta_i\bar{I}_m){\bf g}_i\otimes I_s\big)\\
&=&\mathcal{V}_{m+1}({\bf \widetilde{r}}_i\otimes I_s),\quad i=1,\ldots,k,
\end{eqnarray*}
where ${\bf \widetilde{r}}_i=(\bar{H}_m-\theta_i\bar{I}_m){\bf g}_i,~ i=1,\ldots,k$.
\begin{rem}
In \cite{Duan}, a global harmonic Arnoldi method was proposed for computing harmonic Ritz pairs
of large matrices. Here the difference is that our approach is based on the weighted projection techniques, and the method in \cite{Duan} is a special case of ours as $D=I$.
\end{rem}

In the following, we characterize the relationship between the weighted harmonic residuals and the residual from the weighted global GMRES algorithm.
Let $X_0\in \mathbb{R}^{n\times s}$ be the initial guess and $R_0=C-AX_0-X_0B$ be the initial residual.
After one cycle of the weighted global GMRES Algorithm, we have the approximate solution $X_m=X_0+\mathcal{V}_m({\bf y}^{w}_m\otimes I_s)$, where ${\bf y}^{w}_m$ is defined in (\ref{2.11}). The associated residual with respect to $X_m$ is
$$
R_m=\mathcal{V}_{m+1}\big((\beta {\bf e}_1-\bar{H}_m{\bf y}^{w}_m)\otimes I_s\big)=\mathcal{V}_{m+1}({\bf {r}}_m\otimes I_s),
$$
where ${\bf {r}}_m\equiv\beta {\bf e}_1-\bar{H}_m{\bf y}^{w}_m$.
The following result shows that ${\bf {r}}_m$ and $\{{\bf \widetilde{r}}_i\}_{i=1}^k$ are collinear with each other.
\begin{theorem}\label{pro3.1}
Let $\widetilde{R}_i=\mathcal{V}_{m+1}({\bf \widetilde{r}}_i\otimes I_s),~i=1,\ldots,k$, be the weighted harmonic residuals and $R_m=\mathcal{V}_{m+1}({\bf {r}}_m\otimes I_s)$ be the weighted global GMRES residual, respectively, where ${\bf \widetilde{r}}_i=(\bar{H}_m-\theta_i\bar{I}_m){\bf g}_i$ and $\ {\bf {r}}_m=\beta {\bf e}_1-\bar{H}_m{\bf y}^{w}_m.$ Then $\{{\bf \widetilde{r}}_i\}_{i=1}^k$ and ${\bf {r}}_m$ are collinear with each other.
\end{theorem}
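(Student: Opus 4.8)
The plan is to show that both the global GMRES residual coordinate vector ${\bf r}_m$ and every harmonic residual coordinate vector ${\bf \widetilde{r}}_i$ are orthogonal (in the ordinary Euclidean sense in $\mathbb{R}^{m+1}$) to the range of $\bar{H}_m$, and then to observe that this range has codimension one, so that its orthogonal complement is a single line. Any collection of vectors lying in one line is automatically pairwise collinear, which is exactly the assertion. Note that the $D$-weighting has already been absorbed into the fact that $\mathcal{V}_{m+1}$ is $D$-orthonormal, so that the whole question reduces to the small matrix $\bar{H}_m$ and plain Euclidean orthogonality.

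First I would treat ${\bf r}_m$. Since ${\bf y}^{w}_m$ minimizes $\Vert\beta{\bf e}_1-\bar{H}_m{\bf y}\Vert_2$ in (\ref{2.11}), it solves the normal equations (\ref{2.12}), which I rewrite as $\bar{H}_m^T(\beta{\bf e}_1-\bar{H}_m{\bf y}^{w}_m)=0$, i.e.\ $\bar{H}_m^T{\bf r}_m=0$; hence ${\bf r}_m$ is orthogonal to every column of $\bar{H}_m$. For the harmonic residuals the one nontrivial computation is the block identity $\bar{H}_m^T\bar{I}_m=H_m^T$, which follows at once from $\bar{H}_m=[\,H_m^T,\,h_{m+1,m}{\bf e}_m\,]^T$ and $\bar{I}_m=[\,I_m,\,0\,]^T$. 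Using it,
\begin{equation*}
\bar{H}_m^T{\bf \widetilde{r}}_i=\bar{H}_m^T(\bar{H}_m-\theta_i\bar{I}_m){\bf g}_i=\bar{H}_m^T\bar{H}_m{\bf g}_i-\theta_iH_m^T{\bf g}_i,
\end{equation*}
and this vanishes precisely by the harmonic relation (\ref{3.4}). Thus each ${\bf \widetilde{r}}_i$ is also orthogonal to the columns of $\bar{H}_m$.

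Finally I would verify that $\bar{H}_m$ has full column rank $m$, so that its range is $m$-dimensional and the orthogonal complement inside $\mathbb{R}^{m+1}$ is exactly one-dimensional. This uses that the weighted global Arnoldi process of Algorithm \ref{alg1} has not broken down before step $m$, so all subdiagonal entries $h_{j+1,j}$ are nonzero: starting from the last row of $\bar{H}_m$, which contains only $h_{m+1,m}$, a backward sweep forces any vector in the kernel of $\bar{H}_m$ to be zero. Combining the three facts, ${\bf r}_m$ and all the ${\bf \widetilde{r}}_i$ lie on the single line $\mathrm{range}(\bar{H}_m)^\perp$, and are therefore collinear with each other.

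I expect the block identity $\bar{H}_m^T\bar{I}_m=H_m^T$ and the full-rank count to be the crux of the argument: the identity is what converts the generalized eigenvalue relation (\ref{3.4}) into the orthogonality statement, while the rank count is what guarantees the complement is genuinely a line rather than a larger subspace, which is indispensable for concluding collinearity rather than mere coplanarity. Everything else reduces to the normal equations (\ref{2.12}) and the definitions of ${\bf r}_m$ and ${\bf \widetilde{r}}_i$.
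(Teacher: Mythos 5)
Your proof is correct, and it takes a more explicit, coordinate-level route than the paper, although both arguments ultimately rest on the same codimension-one observation. The paper works with the $n\times s$ block residuals: it asserts that $R_m$ and the $\widetilde{R}_i$ all lie in $\mathrm{range}(\mathcal{V}_{m+1})$ and are $D$-orthogonal to $\mathcal{A}\mathcal{V}_m$, infers from this the existence of an $s\times s$ matrix $T$ with $\widetilde{R}_i=R_mT$, and then compares the blocks of $\widetilde{\bf r}_i\otimes I_s=({\bf r}_m\otimes I_s)T$ to force $T$ to be a scalar multiple of $I_s$, which gives collinearity. You never leave $\mathbb{R}^{m+1}$: you verify the two orthogonality relations directly --- $\bar{H}_m^T{\bf r}_m=0$ from the normal equations (\ref{2.12}), and $\bar{H}_m^T\widetilde{\bf r}_i=0$ from (\ref{3.4}) combined with the block identity $\bar{H}_m^T\bar{I}_m=H_m^T$ --- and you supply the rank count showing that $\mathrm{range}(\bar{H}_m)^{\perp}$ is exactly a line. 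This fills in the two points the paper leaves implicit (why the harmonic residuals are $D$-orthogonal to $\mathcal{A}\mathcal{V}_m$, and why the relevant orthogonal complement is one-dimensional, which is precisely what justifies the paper's matrix $T$), and it dispenses with the block-comparison step altogether. The one hypothesis worth flagging explicitly is $h_{m+1,m}\neq 0$; it is guaranteed in the theorem's setting since $V_{m+1}$ is assumed to exist, i.e., the weighted global Arnoldi process has not had a lucky breakdown at step $m$.
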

\begin{proof}
Note that both the weighted harmonic residuals and the residual of the weighted global GMRES algorithm are in $range(\mathcal{V}_{m+1})$, and they are both $D$-orthogonal to $\mathcal{A}\mathcal{V}_m$. Thus, there is an $s\times s$ matrix $T$, such that
\begin{equation}\label{3.6}
\mathcal{V}_{m+1}({\bf \widetilde{r}}_i\otimes I_s)=\mathcal{V}_{m+1}({\bf {r}}_m\otimes I_s)T,
\end{equation}
so we have
$({\bf \widetilde{r}}_i\otimes I_s)=({\bf {r}}_m\otimes I_s)T$. Let ${\bf {r}}_m=(\eta_1,\eta_2,\ldots,\eta_{m+1})^T$ and ${\bf \widetilde{r}}_i=(\tau_1,\tau_2,\ldots,\tau_{m+1})^T$, we obtain from (\ref{3.6}) that
$$
({\bf {r}}_m\otimes I_s)T=\left (\begin{array}{c}
                                         \eta_1I_s \\
                                         \eta_2I_s \\
                                         \vdots \\
                                         \eta_{m+1}I_s
                                       \end{array}\right )T=\left (\begin{array}{c}
                                         \eta_1T \\
                                         \eta_2T \\
                                         \vdots \\
                                         \eta_{m+1}T
                                       \end{array}\right )=\left (\begin{array}{c}
                                         \tau_1I_s \\
                                         \tau_2I_s \\
                                         \vdots \\
                                         \tau_{m+1}I_s
                                       \end{array}\right ),
$$
which implies that $\{{\bf \widetilde{r}}_i\}_{i=1}^k$ and ${\bf {r}}_m$ are collinear with each other.
\end{proof}

We are ready to consider how to apply the deflated restarting strategy to the weighted global GMRES algorithm, and show that a weighted global Arnoldi-like relation still holds after restarting. Let $G_k=[{\bf g}_1, {\bf g}_2, \ldots, {\bf g}_k]$, and let $G_k=Q_k\Gamma_k$ be the reduced QR factorization.
We stress that both forming $G_k$ and computing the QR decomposition can be implemented in real arithmetics; see Step 9 of Algorithm \ref{alg3}.
Then we orthonormalize ${\bf {r}}_m$ against $\left[\begin{array}{c}
Q_k \\
0_{1\times k}
\end{array} \right]$ to get ${\bf q}_{k+1}$, and let $Q_{k+1}=\left[ \begin{array}{c|c}
\begin{array}{c}
Q_k \\
0_{1\times k}
\end{array} & {\bf q}_{k+1}
\end{array}\right]$.

Notice that both (\ref{2.2}) and (\ref{2.3}) hold in the first cycle, i.e.,
$$\mathcal{A}\mathcal{V}_m=\mathcal{V}_{m}(H_m\otimes I_s)+h_{m+1,m} V_{m+1} ({\bf e}_m^{T}\otimes I_s),$$
and thus
{\small\begin{equation}\label{3.7}
\mathcal{A}\mathcal{V}_m({\bf g}_i\otimes I_s)=\mathcal{V}_{m}(H_m\otimes I_s)({\bf g}_i\otimes I_s)+h_{m+1,m}V_{m+1}({\bf e}_m^{T}{\bf g}_i\otimes I_s), \ i=1,\ldots,k .
\end{equation}}
That is,
$$\mathcal{A}\mathcal{V}_m({\bf g}_i\otimes I_s)=[\mathcal{V}_m \ V_{m+1}]\left[\begin{array}{c}
H_m{\bf g}_i\\
h_{m+1,m}{\bf e}_m^{T}{\bf g}_i
\end{array}\right] \otimes I_s.$$
We note that
\begin{eqnarray*}
{\bf \widetilde{r}}_i=(\bar{H}_m-\theta_i\bar{I}_m){\bf g}_i
=\left[\begin{array}{c}
(H_m-\theta_iI){\bf g}_i\\
h_{m+1,m}{\bf e}_m^{T}{\bf g}_i
\end{array}\right],
\end{eqnarray*}
and
\begin{eqnarray*}
\left[\begin{array}{c}
H_m{\bf g}_i\\
h_{m+1,m}{\bf e}_m^{T}{\bf g}_i
\end{array}\right]&=&\left[\begin{array}{c}
H_m{\bf g}_i-\theta_i{\bf g}_i+\theta_i{\bf g}_i\\
h_{m+1,m}{\bf e}_m^{T}{\bf g}_i
\end{array}\right]\\
&=&\theta_i\left[\begin{array}{c}
                                    {\bf g}_i \\
                                     0
                                   \end{array}\right]+{\bf \widetilde{r}}_i, \ i=1,\ldots,k.
\end{eqnarray*}

As a result,
$$\left[\begin{array}{c}
H_m{\bf g}_i\\
h_{m+1,m}{\bf e}_m^{T}{\bf g}_i
\end{array}\right]\in span \left\{ \left[\begin{array}{c}
                                    G_k \\
                                     0
                                   \end{array}\right], {\bf {r}}_m\right\}=span\{Q_{k+1}\}, \ i=1,\ldots,k,$$
where we used $\{{\bf \widetilde{r}}_i\}_{i=1}^k$ and ${\bf {r}}_m$ are collinear with each other; see Theorem \ref{pro3.1}.
Therefore,
$$\mathcal{A}\mathcal{V}_m({\bf g}_i\otimes I_s)\in span\{ \mathcal{V}_{m+1}(Q_{k+1}\otimes I_s)\}, \ \ i=1,\ldots,k,$$
and
\begin{eqnarray}\label{377}
\mathcal{A}\mathcal{V}_m(Q_k\otimes I_s)\subseteq span\{ \mathcal{V}_{m+1}(Q_{k+1}\otimes I_s)\}.
\end{eqnarray}
Define $\mathcal{V}_k^{new}\equiv\mathcal{V}_m(Q_k\otimes I_s)=[V_1^{new},\ldots,V_k^{new}]$, where $V_i^{new}\in\mathbb{R}^{n\times s},~1\leq i\leq k$, so we have
$$
\mathcal{A}\mathcal{V}_m(Q_k\otimes I_s)=\mathcal{A}\mathcal{V}_k^{new}=\mathcal{A}[V_1^{new},\ldots,V_k^{new}].
$$
If we denote
\begin{eqnarray*}
\mathcal{V}_{k+1}^{new}&=&\mathcal{V}_{m+1}(Q_{k+1}\otimes I_s),
\end{eqnarray*}
then we have from (\ref{377}) that
$$\mathcal{A}\mathcal{V}_m(Q_k\otimes I_s)\subseteq span\{\mathcal{V}_{k+1}^{new}\},$$
and there is a $(k+1)s\times ks$ matrix $P$ such that
$$\mathcal{A}\mathcal{V}_m(Q_k\otimes I_s)=\mathcal{V}_{m+1}(\bar{H}_m\otimes I_s)(Q_{k}\otimes I_s)=\mathcal{V}_{m+1}(Q_{k+1}\otimes I_s)P=\mathcal{V}_{k+1}^{new}P.$$
The condition $\mathcal{V}_{k+1}^{new^T}\diamond_{D}\mathcal{V}_{k+1}^{new}=I_{k+1}$ yields
\begin{eqnarray*}
P&=&(Q_{k+1}\otimes I_s)^T\mathcal{V}_{m+1}^T\diamond_D \mathcal{A}\mathcal{V}_m(Q_k\otimes I_s)\\
&=&(Q_{k+1}\otimes I_s)^T\mathcal{V}_{m+1}^T\diamond_D \mathcal{V}_{m+1}(\bar{H}_m\otimes I_s)(Q_k\otimes I_s),\\
&=&Q_{k+1}^T\bar{H}_mQ_k\otimes I_s=\bar{H}_k^{new}\otimes I_s,
\end{eqnarray*}
where $\bar{H}_k^{new}\equiv Q_{k+1}^T\bar{H}_mQ_k\in\mathbb{R}^{(k+1)\times k}$ is generally a dense matrix. In conclusion, we obtain
\begin{eqnarray}\label{3.8}
\mathcal{A}\mathcal{V}_k^{new}=\mathcal{V}_{k+1}^{new}(\bar{H}_k^{new}\otimes I_s).
\end{eqnarray}

Then, we run the weighted global Arnoldi Algorithm from index $(k+1)$ to $m$ with the last $s$ columns of $\mathcal{V}_{k+1}^{new}$ as the starting matrix to build a new $m$-step global Arnoldi relation.
However, $D$ is updated in our new algorithm, see Step 11 of Algorithm \ref{alg3}. In other words, we will perform the remaining $m-k$ steps of global Arnoldi process with a new $D$ (denoted by $D^{new}$ that is from the ``current" residual $R_m$) after deflated restarting.

Denote $\mathcal{V}_m=[\mathcal{V}_{k+1}^{new}~|~\mathcal{V}_{\perp}^{new}]$, where $\mathcal{V}_{\perp}^{new}=[V_{k+2}^{new},\ldots,V_{m}^{new}]$, then $\mathcal{V}_{\perp}^{new}$ is $D^{new}$-orthogonal and it is also $D^{new}$ orthogonal to $\mathcal{V}_{k+1}^{new}$. Denote by $D^{old}$ the weighting matrix obtained from the residual of the ``previous" cycle, we define $\widetilde{D}$-orthogonality of $\mathcal{V}_m$ as follows
\begin{eqnarray}\label{399}
\mathcal{V}_{m}^{T}\diamond_{\widetilde{D}}\mathcal{V}_{m}\equiv\left\{\begin{array}{lll}
\mathcal{V}_{k+1}^{new^T}\diamond_{D^{old}}\mathcal{V}_{k+1}^{new}&=&I_{k+1}, \vspace{0.2cm} \\
\mathcal{V}_{\perp}^{new^T}\diamond_{D^{new}}\mathcal{V}_{\perp}^{new}&=&I_{m-k-1}, \vspace{0.2cm} \\
\mathcal{V}_{\perp}^{new^T}\diamond_{D^{new}}\mathcal{V}_{k+1}^{new}&=&0.
\end{array}\right.
\end{eqnarray}
That is, $\mathcal{V}_{m}^{T}\diamond_{\widetilde{D}}\mathcal{V}_{m}=I_m$, and a global Arnoldi-like relation still holds after restarting.
\begin{rem}
In the weighted GMRES-DR presented in \cite[pp.20]{Embree}, Embree {\it et al.} showed how to restart the weighted GMRES-DR algorithm with a change of inner product by using the Cholesky factorization.
However, the new weighting matrix is non-diagonal any more in their strategy, and the computational cost will become much higher when computing the $D$-inner products with respect to non-diagonal matrices. Thanks to (\ref{399}), we indicate that without changing inner products in the weighted and deflated restarting algorithms, a global Arnoldi-like relation is still hold. So there is no need to change the inner product with respect to diagonal matrix to that with non-diagonal matrix, and our scheme is cheaper.
\end{rem}

In summary, we have the following theorem.

\begin{theorem}\label{th3.2}
A global Arnoldi-like relation holds for the weighted global Arnoldi algorithm with deflation
\begin{eqnarray}\label{3.9}
\mathcal{A}\mathcal{V}_m&=&\mathcal{V}_{m}(H_m\otimes I_s)+h_{m+1,m} V_{m+1} ({\bf e}_m^{T}\otimes I_s),\\
&=&\mathcal{V}_{m+1}(\bar{H}_m\otimes I_s). \label{3.10}
\end{eqnarray}
\end{theorem}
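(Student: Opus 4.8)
The plan is to assemble the claimed relation by concatenating two pieces that are already in hand: the deflated-block identity (\ref{3.8}) for the first $k$ columns, and the fresh weighted global Arnoldi recurrences for columns $k+1$ through $m$. Writing $\mathcal{V}_{m+1}=[V_1^{new},\ldots,V_{m+1}^{new}]$ for the relabelled basis and reading off the resulting block structure of $\bar{H}_m$ will yield (\ref{3.10}); peeling off the last block row then gives the equivalent form (\ref{3.9}). The only genuinely delicate point is the orthonormality of the restarted basis, since the weighting matrix is switched from $D^{old}$ to $D^{new}$ midway, and I would handle that last via the hybrid relation (\ref{399}).

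First I would treat the leading $k$ columns. Relation (\ref{3.8}), already derived, reads $\mathcal{A}\mathcal{V}_k^{new}=\mathcal{V}_{k+1}^{new}(\bar{H}_k^{new}\otimes I_s)$ with $\bar{H}_k^{new}=Q_{k+1}^T\bar{H}_mQ_k\in\mathbb{R}^{(k+1)\times k}$. Identifying $V_i:=V_i^{new}$ for $i=1,\ldots,k+1$, this says that the first $k$ block columns of the new $\mathcal{A}\mathcal{V}_m$ equal $\mathcal{V}_{k+1}^{new}$ times the (generally dense) matrix $\bar{H}_k^{new}$. Thus the first $k$ columns of the new $\bar{H}_m$ are exactly $\bar{H}_k^{new}$, padded with zeros in rows $k+2,\ldots,m+1$.

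Next I would treat columns $k+1$ through $m$. Running Algorithm \ref{alg1} from index $k+1$ to $m$ with starting block $V_{k+1}^{new}$ and weighting matrix $D^{new}$ produces, by the standard Arnoldi recurrence, $\mathcal{A}V_j=\sum_{i=1}^{j+1}h_{ij}V_i$ for $j=k+1,\ldots,m$, where each inner loop orthogonalizes against all preceding blocks $V_1,\ldots,V_j$. These recurrences supply the trailing $m-k$ quasi-Hessenberg columns of $\bar{H}_m$; in particular only the last of them contributes to row $m+1$, so that block row equals $h_{m+1,m}{\bf e}_m^T$. Concatenating the leading dense columns with these trailing columns gives a single $(m+1)\times m$ matrix $\bar{H}_m$ and the block identity $\mathcal{A}\mathcal{V}_m=\mathcal{V}_{m+1}(\bar{H}_m\otimes I_s)$, which is (\ref{3.10}); separating the $(m+1)$-st block row as $h_{m+1,m}V_{m+1}({\bf e}_m^T\otimes I_s)$ recovers (\ref{3.9}).

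The main obstacle is the orthonormality bookkeeping, and this is where the change of inner product must be reconciled. The deflated block inherits $D^{old}$-orthonormality from the previous cycle: since $\mathcal{V}_{k+1}^{new}=\mathcal{V}_{m+1}(Q_{k+1}\otimes I_s)$, $Q_{k+1}$ has orthonormal columns, and $\mathcal{V}_{m+1}^{T}\diamond_{D^{old}}\mathcal{V}_{m+1}=I_{m+1}$, one gets $\mathcal{V}_{k+1}^{new^T}\diamond_{D^{old}}\mathcal{V}_{k+1}^{new}=Q_{k+1}^TQ_{k+1}\otimes I_s=I_{k+1}$. Meanwhile the fresh vectors are, by the $D^{new}$-orthogonalization built into Algorithm \ref{alg1}, both $D^{new}$-orthonormal among themselves and $D^{new}$-orthogonal to the entire deflated block $\mathcal{V}_{k+1}^{new}$. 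These are precisely the three identities in (\ref{399}), i.e. $\mathcal{V}_m^{T}\diamond_{\widetilde{D}}\mathcal{V}_m=I_m$ in the hybrid sense. The subtlety I expect to be most scrutinized is that no single diagonal weighting certifies full orthonormality of $\mathcal{V}_m$; the resolution is exactly the block-wise $\widetilde{D}$-orthogonality (\ref{399}), which the construction guarantees piece by piece and which lets the clean Arnoldi-like relations (\ref{3.9})--(\ref{3.10}) survive restarting without ever forming the non-diagonal inner product used in \cite{Embree}.
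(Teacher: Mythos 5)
Your proposal follows essentially the same route as the paper: the theorem there is stated as a summary of the preceding derivation, which consists precisely of the deflated-block identity (\ref{3.8}) for the first $k$ columns, the weighted global Arnoldi recurrences run from index $k+1$ to $m$ for the remaining columns, and the hybrid $\widetilde{D}$-orthogonality (\ref{399}) to certify that the concatenated basis behaves like a $D$-orthonormal one. Your write-up merely makes explicit the block structure of the assembled $\bar{H}_m$ (the dense leading $k$ columns padded with zeros, the quasi-Hessenberg trailing columns, and the single entry $h_{m+1,m}$ in the last row), which the paper leaves implicit, so the two arguments coincide.
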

We are ready to present the main algorithm of this paper.
\begin{algo}
{\textit{ A weighted global GMRES with deflation for large Sylvester matrix equations (W-GLGMRES-D) }}
\label{alg3}
\begin{enumerate}
  \item \textbf{Input:} { $A\in \mathbb{R}^{ n\times n}, B\in \mathbb{R}^{ s\times s}, C\in \mathbb{R}^{ n\times s}$. Choose an initial guess $X_0\in \mathbb{R}^{ n\times s},$ a positive diagonal matrix $D$, the positive integer number $m$ and a convergence tolerance $tol>0$.}
  \item \textbf{Output:} The approximation $X_m$.
  \item  Compute $R_0=C-AX_0-X_0B$, $\beta=\Arrowvert R_0\Arrowvert_{D}$ and $V_{1}=R_0/\beta.$
  \item  Set ${\bf c}=[\beta; 0_{m\times 1}].$
  \item  Run the weighted global Arnoldi algorithm to obtain $\mathcal{V}_{m+1}$ and $\bar{H}_m$.
  \item Solve $\min_{{\bf y}^{w}} \Vert {\bf c} -\bar{H}_m{\bf y}^{w}\Vert_2$ for ${\bf y}^{w}_m$.
  \item  Compute $X_m=X_0+\mathcal{V}_m({\bf y}^{w}_m\otimes I_s)$, and $R_m=C-AX_m-X_mB$.
  \item  If $\Vert R_m\Vert_D< tol$, then stop, else continue.
  \item Compute the $k$ eigenpairs $(\theta_i, {\bf g}_i)$ with the smallest magnitude from (\ref{3.4}) or (\ref{3.5}). Set $G_k=[{\bf g}_1,\ldots ,{\bf g}_k]$: We first sperate the $\{{\bf g}_i\}'$s into real and imaginary parts if they are complex, to form the columns of $G_k\in \mathbb{R}^{m\times k}$. Both the real and the imaginary parts need to be included. Then we compute the reduced QR factorization of $G_k$ : $G_k=Q_k\Gamma_k$, where $Q_k=[{\bf q}_1,\ldots ,{\bf q}_k]$. Notice that both $G_k$ and $Q_k$ are real.
  \item {Extend the vectors ${\bf q}_1,\ldots ,{\bf q}_k$ to length $m+1$ with zero entries, then orthonormalize the vector ${\bf r}_m={\bf c} -\bar{H}_m{\bf y}^{w}_m$ against the columns of $\left[ \begin{array}{c}
Q_k \\
0
\end{array} \right]$ to form ${\bf q}_{k+1}.$ Set $Q_{k+1}=\left[ \begin{array}{c|c}
\begin{array}{c}
Q_k \\
0
\end{array} & {\bf q}_{k+1}
\end{array}\right]$. }
  \item  Compute $\mathcal{V}_{k+1}^{new}=\mathcal{V}_{m+1}(Q_{k+1}\otimes I_s)$ and $\bar{H}_k^{new}=Q_{k+1}^T\bar{H}_mQ_k$, note that $\bar{H}_k^{new}$ is generally a full matrix. Update $D$ according to Options 1--3.
  \item Run the weighted global Arnoldi algorithm from index $(k+1)$ to $m$ to obtain $\mathcal{V}_{m+1}$ and $\bar{H}_m$, where the $(k+1)$th block is the last $s$ columns of ${\mathcal{V}}_{k+1}^{new}$.
  \item Set $X_0=X_m, R_0=R_m$ and ${\bf c}=\mathcal{V}_{m+1}^{T}\diamond_{D} R_0$, and go to Step 6.
\end{enumerate}
\end{algo}
\medskip
\section{Numerical Experiments}\label{sec4}
In this section, we perform some numerical experiments to show the potential of our new algorithms for solving large Sylvester matrix equations.
All the numerical examples were performed using MATLAB R2013b on PC-Pentium(R) with CPU 2.66 GHz and 4.00 of RAM.
In all the algorithms, we set $X_{0}=0_{n\times s}$ to be the initial guess, and choose $C=sprand(n,s,s)$ as the right-hand side matrix, where $sprand(n,s,s)$ is the MATLAB command generating a random, $n$-by-$s$, sparse matrix with approximately $s\times n\times s$ uniformly distributed nonzero entries. Moreover, we use the stopping criterion
\begin{equation}\label{411}
\frac{\Vert\mathcal{V}_{m+1}((\beta {\bf e}_1-\bar{H}_m{\bf y}^{w}_m)\otimes I_s)\Vert_D}{\Vert C\Vert_D}\leq tol=10^{-6},
\end{equation}
and all the algorithms will be stopped as soon as the maximal iteration number $maxit=2500$ is reached. For all the algorithms, we consider comparisons in three aspects: the number of iterations (referred to  {\tt iter}), the runtime in terms of seconds (referred to {\tt CPU}) and the ``real" residual in terms of Frobenius norm (referred to {\tt res.norm})
defined as
\begin{equation}\label{422}
res.norm=\frac{\Vert C-AX_m-X_mB\Vert_F}{\Vert C\Vert_F},
\end{equation}
where $X_m$ are the computed solutions from the algorithms.
In the tables below, we denote by $m,k$ the number of global Arnoldi process and the number of harmonic Ritz block vectors added to the search subspace, respectively.

\medskip
{\bf Example 1.}~{In this example, we illustrate the numerical behavior of Algorithm \ref{alg2} (W-GLGMRES) for different choices of $D$, and show efficiency of our three new weighting strategies proposed in Options 1--3.
To this aim, we compare the performance of W-GLGMRES with the standard global GMRES algorithm (GLGMRES) proposed in \cite{Panjehali}.

The matrices $A$ and $B$ are obtained from the discretization of the operators \cite{Agoujil}
$$
L_{i}(u)=\Delta u-f_{1,i}(x,y)\frac{\partial u}{\partial x}-f_{2,i}(x,y)\frac{\partial u}{\partial y}-f_{3,i}(x,y)u,\quad i=1,2,
$$
on the unit square $[0,1]\times [0,1]$ with homogeneous Dirichlet boundary conditions. In these operators, we have
$f_{1,1}(x,y)=e^{x^2+y}, f_{1,2}(x,y)=2xy, f_{2,1}(x,y)=\sin(x+2y), f_{2,2}(x,y)=e^{xy}$, $ f_{3,1}(x,y)=\cos(xy)$ and $f_{3,2}(x,y)=xy.$ The dimensions of matrices $A$ and $B$ are $n=n_{0}^2$ and $s=s_{0}^2$, respectively. By using the command $fdm\_2d\_matrix$ from LYAPACK \cite{Penzel}, we can extract the matrices $A=fdm(f_{1,1}, f_{2,1}, f_{3,1})$ and $B=fdm(f_{1,2}, f_{2,2}, f_{3,2})$.

We make use of three cases for $D$, i.e., $D_1,D_2$ and $D_3$, which are proposed in Options 1--3. Note that they could be updated during the cycles.
We also consider the case of $D=I_{n\times n}$ in which Algorithm \ref{alg2} reduces to the standard GLGMRES algorithm for large Sylvester matrix equations \cite{Panjehali}.
Table \ref{tab1} lists the numerical results for different choices of $m,s$ and $n$; and Figure \ref{figure1} plots the convergence curves of the algorithms for $n=22500$ and 40000 as $s=16, m=15$.

From Table \ref{tab1} and Figure \ref{figure1}, we observe that the three weighted GLGMRES algorithms need much fewer iterations and much less CPU time than the standard GLGMRES algorithm, and they reach about the same accuracy in terms of the ``real" residual norm. More precisely, W-GLGMRES performs better than the standard GLGMRES algorithm, using $D_1$, $D_2$ or $D_3$ as the weighting matrix; and it works the best with $D_3$.
All these demonstrate that the WGLGMRES algorithm has the potential to improve the convergence, and also it is more robust and efficient than the standard global GMRES algorithm.

\begin{table}
  \centering
  \caption {Example 1: Performance of W-GLGMRES with different choices of $D$}\label{tab1}
  \begin{tabular}{c|c|c|c|c|c|c|c|c}
& & & \multicolumn{3}{c|}{\scriptsize{$n=22500$}} & \multicolumn{3}{|c}{\scriptsize{$n=40000$}} \\
\cline{4-9}
\scriptsize{$s$}&\scriptsize{$m$}&\scriptsize{$D$} &\scriptsize{iter} & \scriptsize{res.norm} &\scriptsize{CPU}&\scriptsize{iter} & \scriptsize{res.norm} & \scriptsize{CPU} \\
\hline
\scriptsize{}&\scriptsize{}&\scriptsize{$I$} &\scriptsize{271} &\scriptsize{9.6980e-07} & \scriptsize{1.7603e+03}&\scriptsize{502} &\scriptsize{9.8712e-07} & \scriptsize{3.6526e+03}\\
\scriptsize{}&\scriptsize{}&\scriptsize{$D_1$} & \scriptsize{225} & \scriptsize{8.9007e-07} & \scriptsize{1.4532e+03} &\scriptsize{361} &\scriptsize{9.0881e-07} & \scriptsize{2.6656e+03} \\
\scriptsize{25}&\scriptsize{10}&\scriptsize{$D_2$} & \scriptsize{196} & \scriptsize{8.9191e-07}& \scriptsize{1.2564e+03}&\scriptsize{375} & \scriptsize{9.9170e-07}& \scriptsize{2.7972e+03}\\
\scriptsize{}&\scriptsize{}&\scriptsize{$D_3$} & \scriptsize{149} & \scriptsize{8.9245e-07} & \scriptsize{964.4450}&\scriptsize{247} &\scriptsize{9.2564e-07} & \scriptsize{1.8723e+03}\\
\hline
\scriptsize{}&\scriptsize{}&\scriptsize{$I$} &\scriptsize{287} &\scriptsize{9.6370e-07} &\scriptsize{1.0593e+03}&\scriptsize{511}&\scriptsize{9.9187e-07} & \scriptsize{ 2.6007e+03}\\
\scriptsize{}&\scriptsize{}&\scriptsize{$D_1$} & \scriptsize{224}&\scriptsize{8.6859e-07} &\scriptsize{ 819.1613}&\scriptsize{350} &\scriptsize{8.0319e-07} & \scriptsize{1.7281e+03} \\
\scriptsize{16}&\scriptsize{10}&\scriptsize{$D_2$} & \scriptsize{221}& \scriptsize{9.5079e-07}  & \scriptsize{828.4745} &\scriptsize{278} &\scriptsize{9.0354e-07}& \scriptsize{1.2168e+03}\\
\scriptsize{}&\scriptsize{}&\scriptsize{$D_3$} & \scriptsize{147}&\scriptsize{8.9043e-07} & \scriptsize{540.1904}&\scriptsize{253} &\scriptsize{9.1782e-07} & \scriptsize{1.1863e+03}\\
\hline
\scriptsize{}&\scriptsize{}&\scriptsize{$I$} &\scriptsize{135}&\scriptsize{9.9273e-07}  & \scriptsize{1.0023e+03}&\scriptsize{229} &\scriptsize{9.4550e-07} & \scriptsize{2.5741e+03}\\
\scriptsize{}&\scriptsize{}&\scriptsize{$D_1$} & \scriptsize{93}& \scriptsize{9.2973e-07}& \scriptsize{690.4734}&\scriptsize{164}&\scriptsize{9.6980e-07} & \scriptsize{1.8403e+03} \\
\scriptsize{16}&\scriptsize{15}&\scriptsize{$D_2$} & \scriptsize{85}& \scriptsize{8.8796e-07}& \scriptsize{612.2571}&\scriptsize{138}&\scriptsize{9.1576e-07} & \scriptsize{1.5791e+03}\\
\scriptsize{}&\scriptsize{}&\scriptsize{$D_3$} & \scriptsize{77}& \scriptsize{8.9375e-07}& \scriptsize{558.9338}&\scriptsize{125} &\scriptsize{8.8805e-07}& \scriptsize{1.4288e+03}\\
\end{tabular}
\end{table}

\begin{figure}
\centering
 \includegraphics[width=5cm]{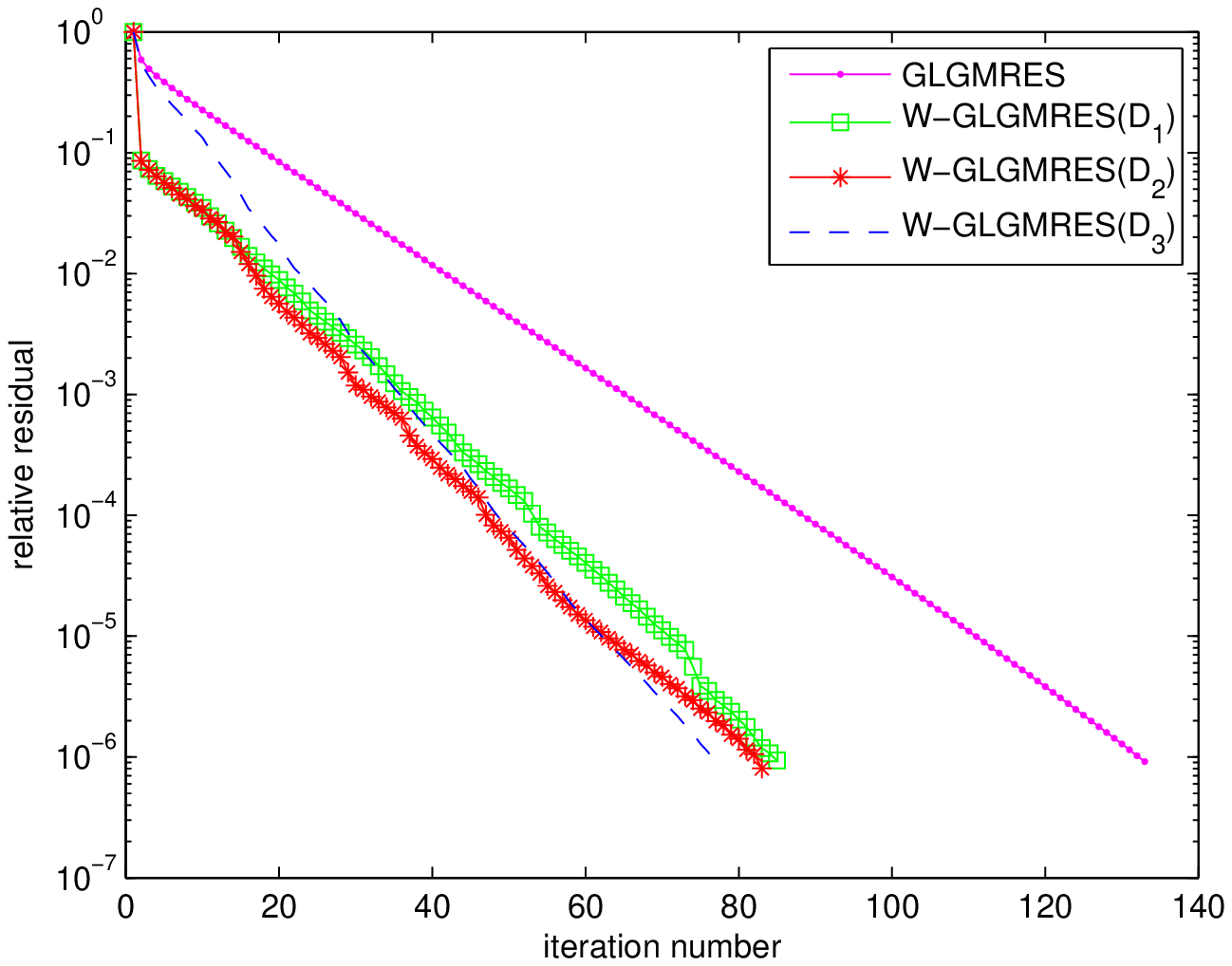}
 \quad
 \includegraphics[width=5cm]{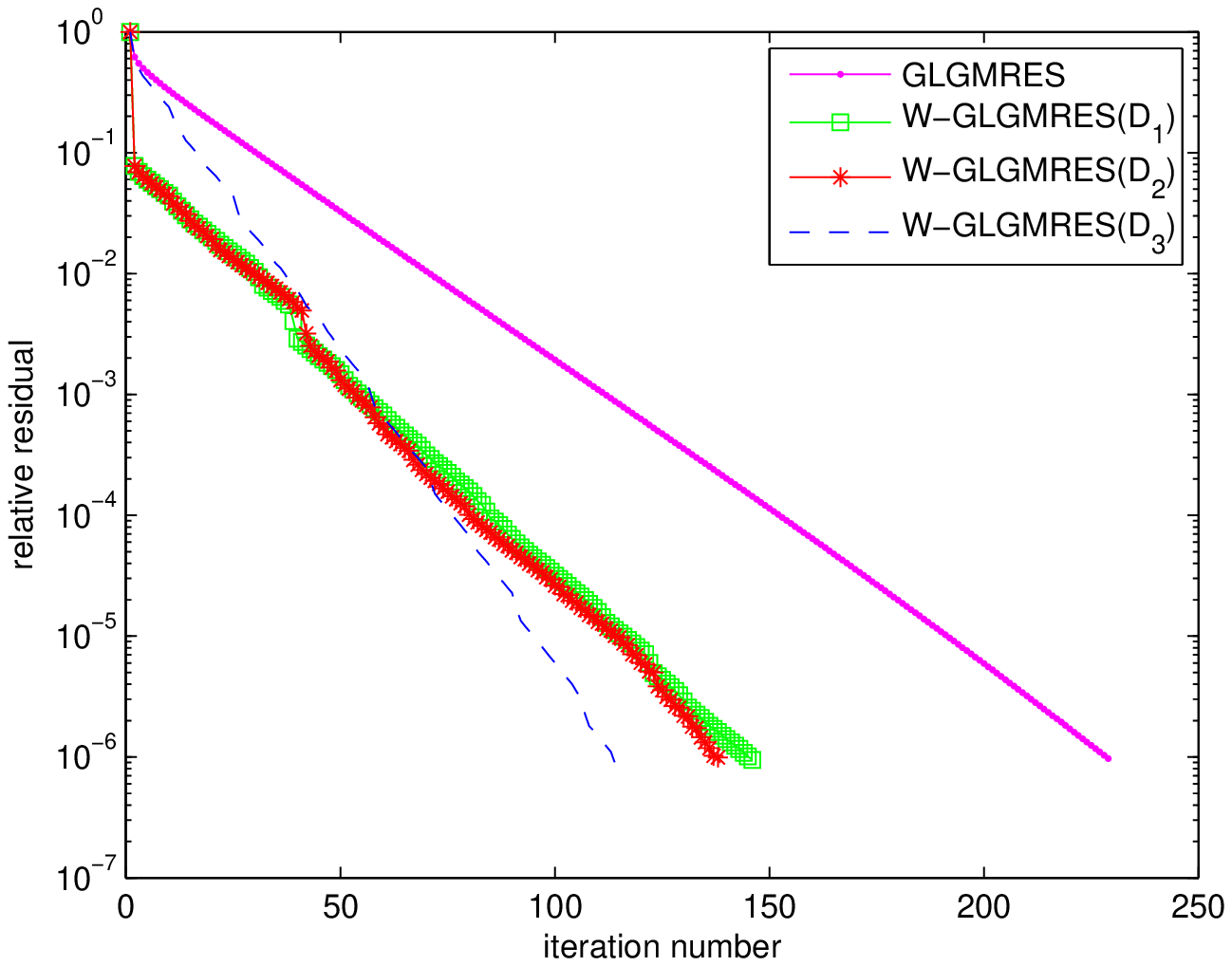}
 \caption{Example 1: Convergence curves of the W-GLGMRES algorithm and those of the standard GLGMRES algorithm. Left: $n=22500, s=16, m=15$; Right: $n=40000, s=16, m=15$. }\label{figure1}
 \end{figure}
\medskip
{\bf Example 2.}~{ In this example, we compare our weighting strategies with the ones proposed in \cite{Heyouniessai,Saberinajafi}, and show effectiveness of our new strategies. In \cite{Heyouniessai}, Heyouni {\it et al.} considered the linear equation $AX=C$,
and proposed a weighted matrix $D$ with elements $(D)_{i,j}=\sqrt{ns}\vert C\vert/\Vert C\Vert_F$, where $\vert C\vert$ is the matrix with absolute values of $C$. They then introduced a weighted strategy as
$(X, Y )_D = tr(X^T(D\circ Y))$, where $D\circ Y$ denotes the Hadamard product of $D$ and $Y$.
In \cite{Saberinajafi}, Saberi Najafi {\it et al.} suggested choosing the weighting matrix $D$ as a diagonal random matrix whose diagonal elements are uniformly and randomly chosen from $(0,2)$. In all the numerical examples from now on, we use $D_3$ as the weighting strategy in our new algorithms.

The test matrices are available from the Matrix Market Collection \cite{Matrix Market website}. The names of these matrices, the size, the density of nonzeros elements and the type of the matrices are shown in the Table \ref{tab2}.
Table \ref{tab3} lists the iteration numbers, CPU time and residual norms of the approximations, obtained from running W-GLGMRES with three different weighting strategies. The results demonstrate that by using our weighting strategy $D_3$, the weighted global GMRES converges faster, and it needs fewer number of iterations and less CPU time than the other two strategies given in \cite{Heyouniessai,Saberinajafi}. In this example, the ``Hadamard product" strategy \cite{Heyouniessai} is better than the ``randomized" strategy \cite{Saberinajafi} according to iteration numbers and CPU time, while our new strategy based on the residual works the best. However, we find that the ``real residual" norm {\it res.norm} computed from the ``Hadamard product" strategy \cite{Heyouniessai} may be larger than the desired tolerance $tol=10^{-6}$ in some cases, and it is obvious to see that our new strategy can cure this drawback very well. Indeed, the stopping criterion used is (\ref{411}) in practical calculations, rather than (\ref{422}).
Figures \ref{figure2} and \ref{figure3} plot the convergence curves of the three algorithms. Again, they illustrate the effectiveness and efficiency of our new choice of weighting matrix.

\begin{table}
\centering
\caption {Summary of of the test matrices for Example 2--Example 4.}\label{tab2}
\begin{tabular}{c|c|c|c|c|c}
\scriptsize{Matrix}&\scriptsize{n}&\scriptsize{nnz} &\scriptsize{Density} & \scriptsize{Density}& \scriptsize{Application area}\\
\cline{1-6}
\scriptsize{saylr4}& \scriptsize{3564 }&\scriptsize{22316 }&\scriptsize{0.0017}&\scriptsize{real unsymmetric} & \scriptsize{Oil reservoir modeling }\\
\scriptsize{add32}& \scriptsize{4960}&\scriptsize{19848}&\scriptsize{0.0008}&\scriptsize{real unsymmetric} & \scriptsize{Electronic circuit design}\\
\scriptsize{sherman4}& \scriptsize{1104}&\scriptsize{3786}&\scriptsize{0.0031}&\scriptsize{real unsymmetric} & \scriptsize{Oil reservior modeling}\\
\scriptsize{sherman2}&\scriptsize{1080}&\scriptsize{23094}&\scriptsize{0.0198}&\scriptsize{real unsymmetric}& \scriptsize{Oil reservoir modeling}\\
\scriptsize{pde2961}&\scriptsize{2961}&\scriptsize{14585}&\scriptsize{0.0231}&\scriptsize{real unsymmetric}& \scriptsize{Partial differential equation }
\end{tabular}
 \end{table}

{\small \begin{table}
\caption{Example 2: Numerical results of W-GLGMRES with different weighting stratiges.
}
\label{tab3}
 \begin{tabular}{c|c|c c c|c c c}
& & \multicolumn{3}{|c|}{ \scriptsize{$m=10$}} & \multicolumn{3}{|c}{\scriptsize{$m=20$}} \\
\cline{3-8}
\scriptsize{Problem} & \scriptsize{Weighting Strategy}&  \scriptsize{iter} & \scriptsize{res.norm} & \scriptsize{CPU} &  \scriptsize{iter} & \scriptsize{res.norm} & \scriptsize{CPU} \\
\hline
\scriptsize{$A=sherman4$} &\scriptsize\cite{Heyouniessai}& \scriptsize{102} &\scriptsize{1.8165e-06}& \scriptsize{997.0492}& \scriptsize{39}& \scriptsize{1.7936e-06} & \scriptsize{1.0142e+03}\\
\scriptsize{$B=fdm(cos(xy),e^{y^2x},100)$} &\scriptsize\cite{Saberinajafi}& \scriptsize{143} &\scriptsize{7.9507e-07}& \scriptsize{1.4222e+03}& \scriptsize{44}& \scriptsize{9.6308e-07} & \scriptsize{1.8846e+03}\\
\scriptsize{$n=1104, s=400$} &\scriptsize{$D_3$}& \scriptsize{38} &\scriptsize{6.1240e-07}& \scriptsize{371.8751}& \scriptsize{16}& \scriptsize{8.9240e-07} & \scriptsize{374.3088}\\
\hline
\scriptsize{$A=add32$} &\scriptsize\cite{Heyouniessai}& \scriptsize{25} &\scriptsize{8.0562e-06}& \scriptsize{1.6571e+03}& \scriptsize{12}& \scriptsize{3.4655e-06} & \scriptsize{1.3972e+03}\\
\scriptsize{$B=fdm(sin(xy),e^{xy},10)$} &\scriptsize\cite{Saberinajafi}& \scriptsize{40} &\scriptsize{8.5137e-07}& \scriptsize{2.2993e+03}& \scriptsize{24}& \scriptsize{8.0102e-07} & \scriptsize{2.7319e+03}\\
\scriptsize{$n=4960, s=400$} &\scriptsize{$D_3$}& \scriptsize{26} &\scriptsize{8.1566e-07}& \scriptsize{957.1628}& \scriptsize{10}& \scriptsize{6.9810e-07} & \scriptsize{934.2591}\\
\hline
\scriptsize{$A=saylr4$} &\scriptsize\cite{Heyouniessai}& \scriptsize{84} &\scriptsize{1.4897e-06}& \scriptsize{3.8107e+03}& \scriptsize{34}& \scriptsize{1.2236e-06} & \scriptsize{2.9038e+03}\\
\scriptsize{$B=fdm(sin(xy),e^{xy},10)$} &\scriptsize\cite{Saberinajafi}& \scriptsize{115} &\scriptsize{9.2482e-07}& \scriptsize{4.9978e+03}& \scriptsize{40}& \scriptsize{6.45505e-07} & \scriptsize{4.7578e+03}\\
\scriptsize{$n=3564, s=400$} &\scriptsize{$D_3$}& \scriptsize{42} &\scriptsize{9.0481e-07}& \scriptsize{1.7269e+03}& \scriptsize{16}& \scriptsize{7.1080e-07} & \scriptsize{1.8709e+03}\\
\hline
\scriptsize{$A=sherman2$} &\scriptsize\cite{Heyouniessai}& \scriptsize{31} &\scriptsize{ 7.7570e-07}& \scriptsize{996.7216}& \scriptsize{13}& \scriptsize{7.4740e-07} & \scriptsize{843.9966}\\
\scriptsize{$B=fdm(sin(xy),e^{xy},10)$} &\scriptsize\cite{Saberinajafi}& \scriptsize{48} &\scriptsize{8.0974e-07}& \scriptsize{1.8602e+03}& \scriptsize{16}& \scriptsize{6.9319e-07} & \scriptsize{2.5471e+03}\\
\scriptsize{$n=1080, s=400$} &\scriptsize{$D_3$}& \scriptsize{20} &\scriptsize{ 1.9301e-07}& \scriptsize{544.5839}& \scriptsize{12}& \scriptsize{2.0916e-07} & \scriptsize{668.4175}\\
\hline
\scriptsize{$A=pde2961$} &\scriptsize\cite{Heyouniessai}& \scriptsize{32} &\scriptsize{1.3552e-06}&\scriptsize{1.0066e+03} & \scriptsize{12}& \scriptsize{1.4062e-06} & \scriptsize{879.7433}\\
\scriptsize{$B=fdm(sin(xy),e^{xy},10)$} &\scriptsize\cite{Saberinajafi}& \scriptsize{47} &\scriptsize{8.5788e-07}& \scriptsize{1.5451e+03}& \scriptsize{16}& \scriptsize{8.6097e-07} & \scriptsize{1.1724e+03}\\
\scriptsize{$n=2961, s=400$} &\scriptsize{$D_3$}& \scriptsize{28} &\scriptsize{6.6487e-07}& \scriptsize{878.394}& \scriptsize{11}& \scriptsize{9.8081e-07} & \scriptsize{795.3090}\\
\end{tabular}
\end{table}}

\begin{figure}
\centering
  \includegraphics[width=5cm]{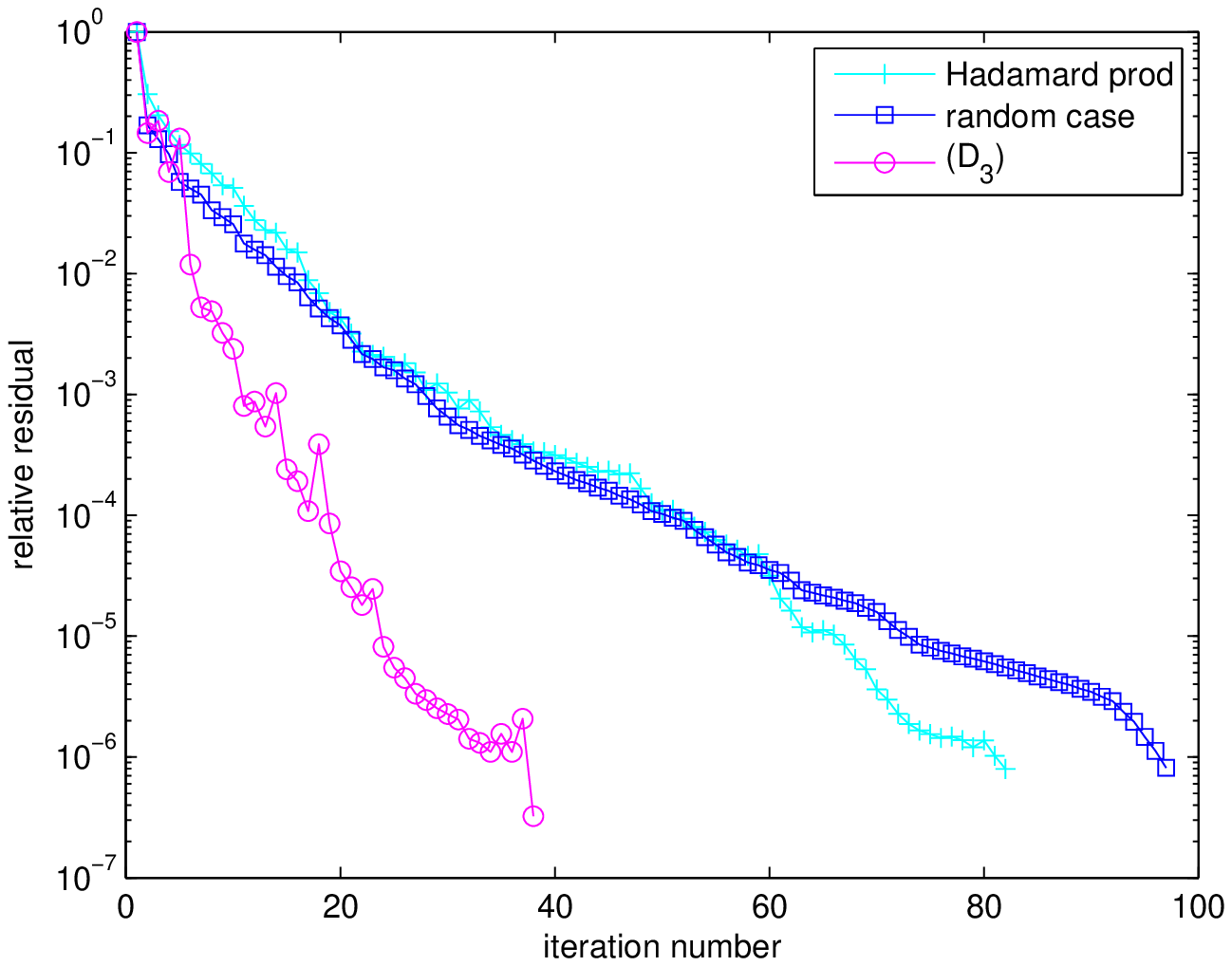}
  \qquad
 \includegraphics[width=5cm]{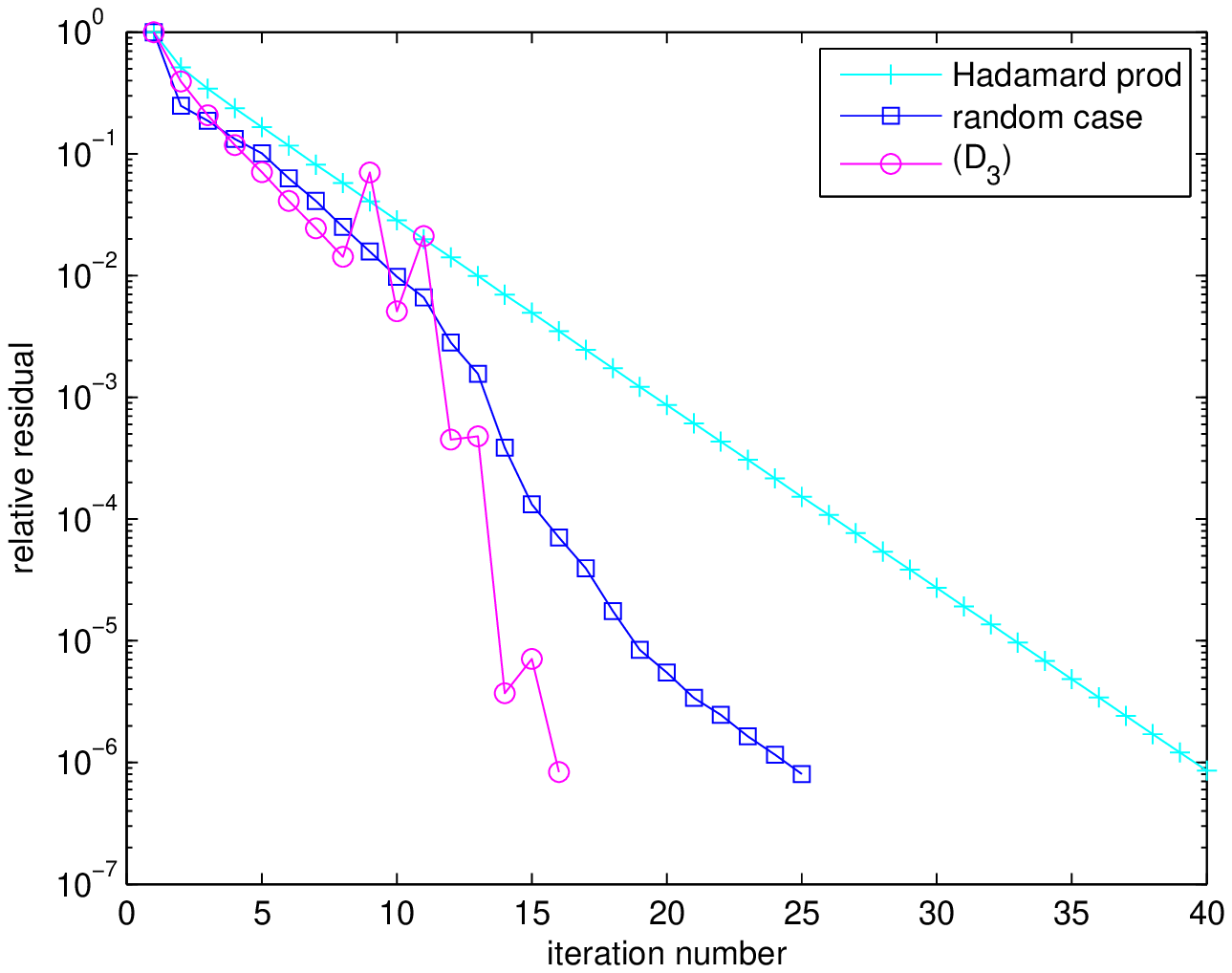}
 \caption{Example 2: Convergence curves of W-GLGMRES with different choices of $D$: the ``Hadamard product" strategy \cite{Heyouniessai}, the ``randomized" strategy \cite{Saberinajafi}, and $D_3$. Left: $A=sherman4$, Right: $A=add32$, $m=10$.  }\label{figure2}
  \end{figure}
\begin{figure}
\centering
\includegraphics[width=5cm]{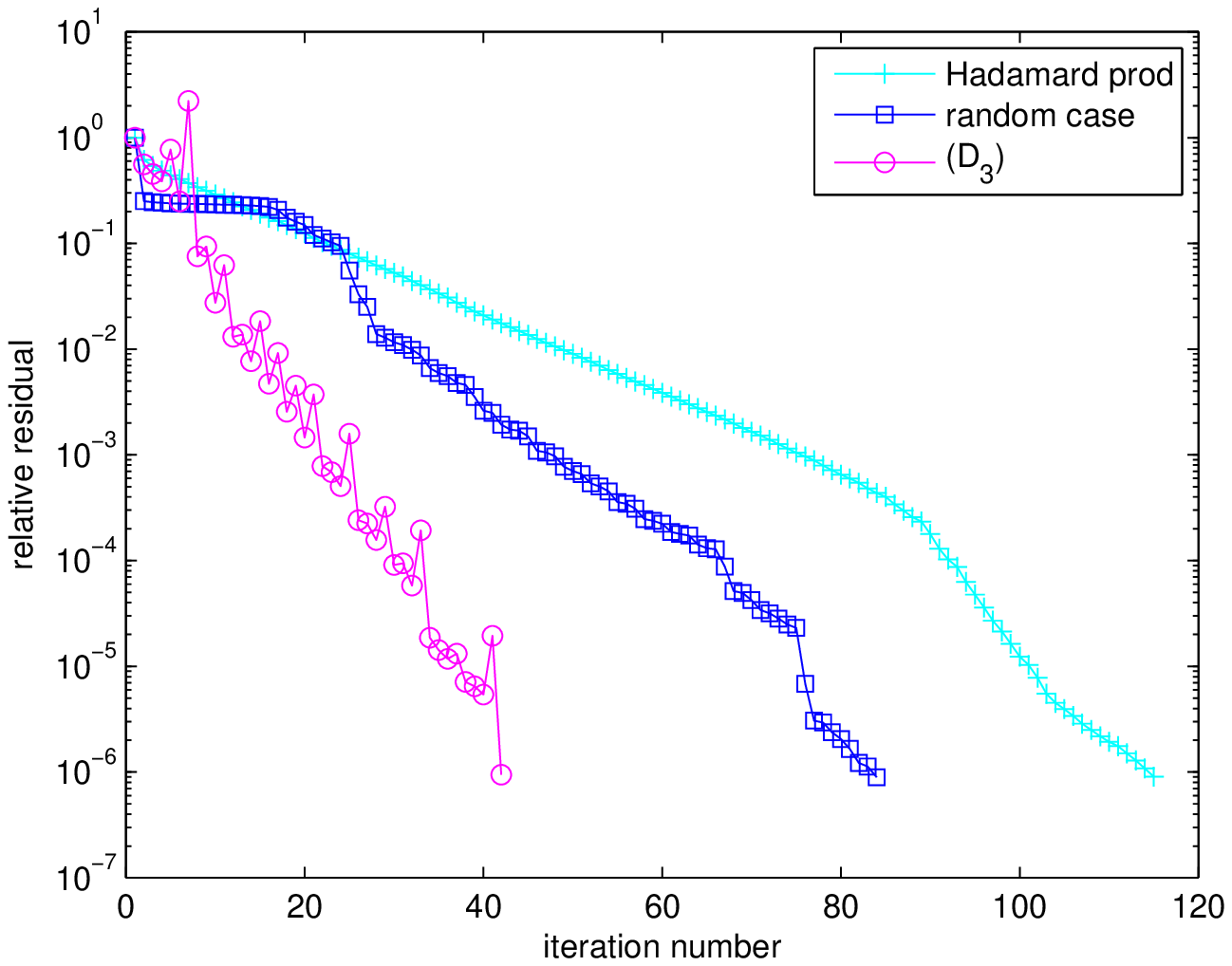}
\quad
\includegraphics[width=5cm]{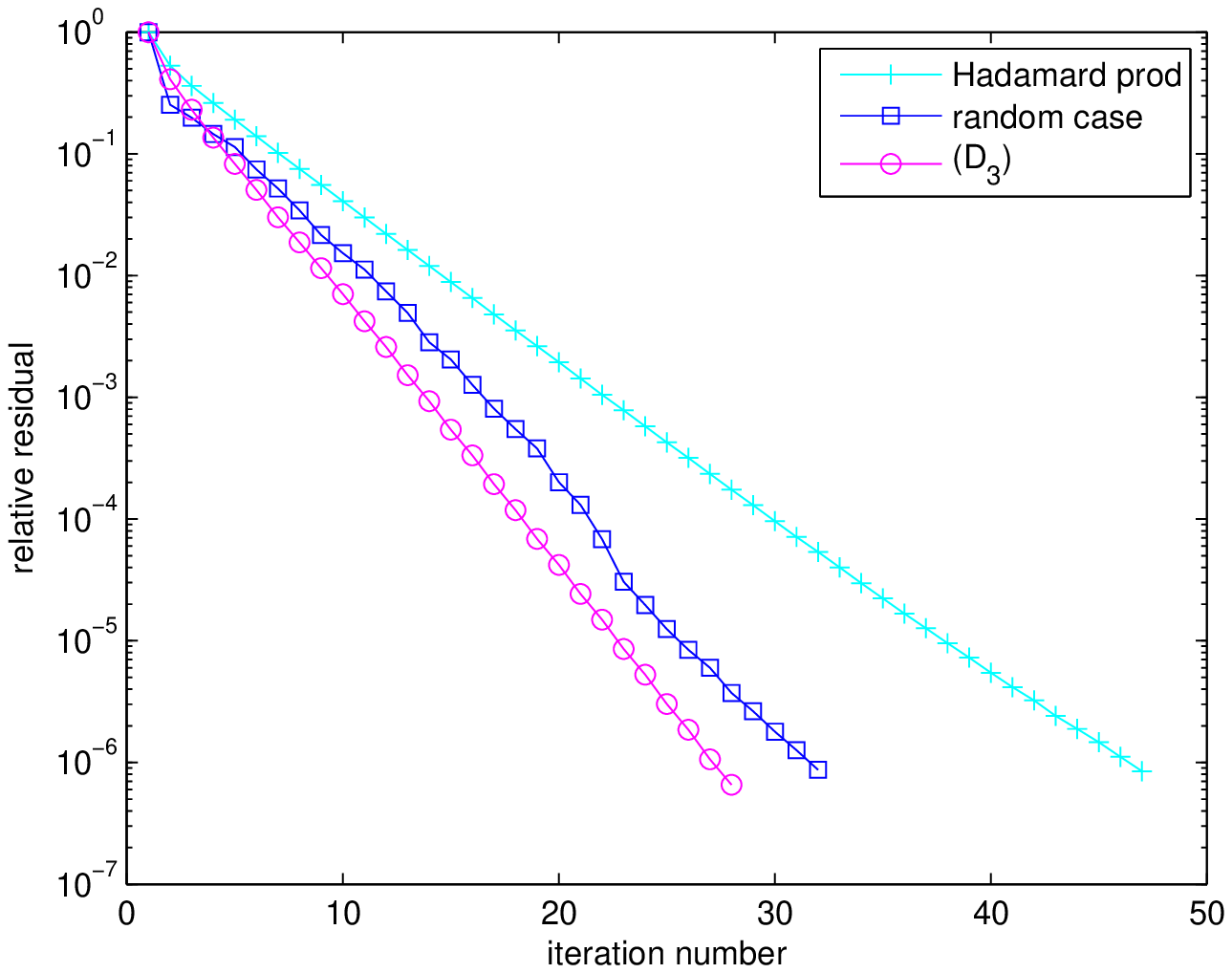}
 \caption{Example 2: Convergence curves of W-GLGMRES with different choices of $D$: the ``Hadamard product" strategy \cite{Heyouniessai}, the ``randomized" strategy \cite{Saberinajafi}, and $D_3$. Left: $A=saylr4$, Right: $A=pde2961$, $m=10$.}\label{figure3}
  \end{figure}

\medskip
{\bf Example 3.}~{When $D=I$, Algorithm \ref{alg3} reduces to the global GMRES algorithm with deflation, which is mathematically equivalent to the algorithm proposed in \cite{Lin}. In this example, we try to show that the weighted global GMRES with deflation is more efficient than the global GMRES algorithm with deflation. To show the efficiency of Algorithm \ref{alg3} (W-GLGMRES-D), we compare it with the global GMRES algorithm (GLGMRES), Algorithm \ref{alg2} (W-GLGMRES), and the global GMRES algorithm with deflation (GLGMRES-D).
In the first test problem, we use $B=fdm(\cos(xy),e^{y^2x},100)$}, and for test problems 2--5, we use $B=fdm(\sin(xy),e^{xy},10)$.

Table \ref{tab4} reports the results of the five test problems, where we use different $m$ and $k$ in W-GLGMRES-D and GLGMRES-D. It is seen that both W-GLGMRES-D and W-GLGMRES outperforms GLGMRES-D and GLGMRES in most cases, which illustrates the effectiveness of our weighting strategy. Furthermore,
W-GLGMRES-D is superior to the other three algorithms in terms of iteration numbers, CPU time, and accuracy.
Specifically, for the 4-th test problem, we see GLGMRES and W-GLGMRES fail to converge with in 2500 iterations, while the algorithms with deflation work quite well. This illustrates that the deflation strategy can improve convergence of the standard global GMRES algorithms for large Sylvester matrix equations.
In addition, in Figures \ref{figure4} and \ref{figure5}, we compare GLGMRES, WGLGMRES, GLGMRES-D and W-GLGMRES-D, where $m=20,k=10$. From Table \ref{tab4} and Figures \ref{figure4} --\ref{figure5}, we conclude that
applying deflation strategy with weighting technique leads to much better solutions.

\begin{table}\footnotesize
 \centering
\caption{Example 3: Numerical results of the four algorithms.}\label{tab4}
\begin{tabular}{c|c|c|c|c|c|c}
\scriptsize{} & \scriptsize{Problem}& \scriptsize{$(m,k)$}&\scriptsize{Algorithm}&\scriptsize{iter} &\scriptsize{res.norm} &  \scriptsize{CPU}\\
\cline{1-7}
\scriptsize{} &\scriptsize{}& \scriptsize{(20,-)}& \scriptsize{GLGMRES}&\scriptsize{60}&\scriptsize{9.2014e-07} &\scriptsize{871.0590}\\
\scriptsize{} &\scriptsize{} & \scriptsize{(20,-)}& \scriptsize{W-GLGMRES} &\scriptsize{18}&\scriptsize{7.8295e-07}& \scriptsize{387.4749}\\
\scriptsize{} & \scriptsize{$A=sherman4$} & \scriptsize{(20,10)}&  \scriptsize{GLGMRES-D}& \scriptsize{7}&\scriptsize{5.8933e-07} &\scriptsize{146.6579}\\
\scriptsize{1} &\scriptsize{} & \scriptsize{(20,10)}& \scriptsize{W-GLGMRES-D}&\scriptsize{4}&\scriptsize{6.1324e-13} & \scriptsize{79.3758}\\
\scriptsize{} &\scriptsize{$n=1104, s=400$} & \scriptsize{(20,15)}& \scriptsize{GLGMRES-D}&\scriptsize{7}&\scriptsize{5.5915e-08} &\scriptsize{103.9108}\\
\scriptsize{} &\scriptsize{}& \scriptsize{(20,15)}& \scriptsize{W-GLGMRES-D}&\scriptsize{5}&\scriptsize{1.7006e-14} &\scriptsize{68.0248}\\
\scriptsize{} &\scriptsize{}& \scriptsize{(30,20)}& \scriptsize{GLGMRES-D}&\scriptsize{8}&\scriptsize{3.0370e-08} & \scriptsize{243.1432}\\
\scriptsize{} &\scriptsize{}& \scriptsize{(30,20)}& \scriptsize{W-GLGMRES-D}&\scriptsize{5}&\scriptsize{5.1292e-14} &\scriptsize{179.8224}\\
 \hline
 \scriptsize{} &\scriptsize{}& \scriptsize{(20,-)}& \scriptsize{GLGMRES}&\scriptsize{14}&\scriptsize{6.7001e-07} &\scriptsize{1.1432e+03}\\
\scriptsize{} &\scriptsize{}& \scriptsize{(20,-)}& \scriptsize{W-GLGMRES} &\scriptsize{11} &\scriptsize{2.9613e-07}& \scriptsize{1.2481e+03}\\
\scriptsize{} &\scriptsize{$A=add32$} & \scriptsize{(20,10)}& \scriptsize{GLGMRES-D}&\scriptsize{7}&  \scriptsize{2.0322e-08} & \scriptsize{581.1866}\\
\scriptsize{2} &\scriptsize{}& \scriptsize{(20,10)} & \scriptsize{W-GLGMRES-D} &\scriptsize{4} &\scriptsize{2.7931e-09}&  \scriptsize{348.0351}\\
\scriptsize{} &\scriptsize{$n=4960, s=400$}& \scriptsize{(20,15)}& \scriptsize{GLGMRES-D}& \scriptsize{7} &\scriptsize{3.5901e-08} & \scriptsize{549.6019}\\
\scriptsize{} &\scriptsize{} & \scriptsize{(20,15)}& \scriptsize{W-GLGMRES-D} &\scriptsize{4} &\scriptsize{1.2383e-09} & \scriptsize{309.2734}\\
\scriptsize{} &\scriptsize{}& \scriptsize{(30,20)}& \scriptsize{GLGMRES-D}&\scriptsize{6}&\scriptsize{4.6590e-08} &\scriptsize{991.0881}\\
\scriptsize{} &\scriptsize{}& \scriptsize{(30,20)}& \scriptsize{W-GLGMRES-D}&\scriptsize{4}&\scriptsize{3.4711e-07} &\scriptsize{700.5107}\\
 \hline
\scriptsize{} &\scriptsize{}& \scriptsize{(20,-)}& \scriptsize{GLGMRES}&\scriptsize{41}&\scriptsize{7.9900e-07} &\scriptsize{1.9829e+03}\\
\scriptsize{} & \scriptsize{} & \scriptsize{(20,-)}& \scriptsize{W-GLGMRES} &\scriptsize{18}&\scriptsize{6.1862e-07}&\scriptsize{1.5938e+03}\\
\scriptsize{} &\scriptsize{$A=saylr4$} & \scriptsize{(20,10)}& \scriptsize{GLGMRES-D}&\scriptsize{7} &\scriptsize{2.1766e-07} &\scriptsize{445.2339}\\
\scriptsize{3} &\scriptsize{}& \scriptsize{(20,10)}& \scriptsize{W-GLGMRES}&\scriptsize{4} &\scriptsize{9.2134e-07}&\scriptsize{257.8798}\\
\scriptsize{} &\scriptsize{$n=3564, s=400$}& \scriptsize{(20,15)}& \scriptsize{GLGMRES-D}&\scriptsize{7} &\scriptsize{6.9319e-10}& \scriptsize{424.5378}\\
\scriptsize{} &\scriptsize{}& \scriptsize{(20,15)}& \scriptsize{W-GLGMRES-D}&\scriptsize{6}&\scriptsize{2.3769e-09}& \scriptsize{ 346.7902}\\
\scriptsize{} &\scriptsize{}& \scriptsize{(30,20)}& \scriptsize{GLGMRES-D}&\scriptsize{7}&\scriptsize{5.1598e-08} &\scriptsize{687.8864}\\
\scriptsize{} &\scriptsize{}& \scriptsize{(30,20)}& \scriptsize{W-GLGMRES-D}&\scriptsize{5}&\scriptsize{9.8619e-07} &\scriptsize{584.5981}\\
 \hline
\scriptsize{} &\scriptsize{}& \scriptsize{(20,-)}& \scriptsize{GLGMRES}&\scriptsize{$\uparrow 2500$}& \scriptsize{$\dagger$}  &\scriptsize{-}\\
\scriptsize{} & \scriptsize{}& \scriptsize{(20,-)}& \scriptsize{W-GLGMRES}&\scriptsize{$\uparrow 2500$} &\scriptsize{$\dagger$} &\scriptsize{-}\\
\scriptsize{} &\scriptsize{$A=sherman2$}& \scriptsize{(20,10)}& \scriptsize{GLGMRES-D}&\scriptsize{5}&\scriptsize{1.8630e-07}  &\scriptsize{101.9935}\\
\scriptsize{4} &\scriptsize{}& \scriptsize{(20,10)}& \scriptsize{W-GLGMRES-D}&\scriptsize{4}& \scriptsize{5.2809e-08}&\scriptsize{96.6050}\\
\scriptsize{} &\scriptsize{$n=1080, s=400$}& \scriptsize{(20,15)}& \scriptsize{GLGMRES-D}&\scriptsize{6}& \scriptsize{8.8742e-09} &\scriptsize{92.2590} \\
\scriptsize{} &\scriptsize{}& \scriptsize{(20,15)}& \scriptsize{W-GLGMRES-D}&\scriptsize{5}&\scriptsize{5.9891e-11} &\scriptsize{ 88.5462}\\
\scriptsize{} &\scriptsize{}& \scriptsize{(30,20)}& \scriptsize{GLGMRES-D}&\scriptsize{6}&\scriptsize{3.1034e-10} &\scriptsize{211.2737}\\
\scriptsize{} &\scriptsize{}& \scriptsize{(30,20)}& \scriptsize{W-GLGMRES-D}&\scriptsize{5}&\scriptsize{8.0189e-12} &\scriptsize{ 210.1942}\\
\hline
\scriptsize{} &\scriptsize{}& \scriptsize{(20,-)}& \scriptsize{GLGMRES}&\scriptsize{16}& \scriptsize{7.8910e-07}  &\scriptsize{865.9652}\\
\scriptsize{} & \scriptsize{}&\scriptsize{(20,-)}&\scriptsize{W-GLGMRES}&\scriptsize{11}&\scriptsize{5.1991e-07}& \scriptsize{606.6930}\\
\scriptsize{} &\scriptsize{$A=pde2961$}& \scriptsize{(20,10)}& \scriptsize{GLGMRES-D}&\scriptsize{7}&\scriptsize{9.6839e-09} & \scriptsize{411.2545}\\
\scriptsize{5} &\scriptsize{}& \scriptsize{(20,10)}& \scriptsize{W-GLGMRES-D}&\scriptsize{4}& \scriptsize{5.0377e-08}  &\scriptsize{238.1900}\\
\scriptsize{} &\scriptsize{$n=2961, s=400$}& \scriptsize{(20,15)}& \scriptsize{GLGMRES-D}&\scriptsize{6}& \scriptsize{ 3.7837e-07}& \scriptsize{348.5320}\\
\scriptsize{} &\scriptsize{}& \scriptsize{(20,15)}& \scriptsize{W-GLGMRES-D}&\scriptsize{5} &\scriptsize{3.7809e-08}&  \scriptsize{289.2056}\\
\scriptsize{} &\scriptsize{}& \scriptsize{(30,20)}& \scriptsize{GLGMRES-D}&\scriptsize{6}&\scriptsize{1.0925e-08} &\scriptsize{576.2694}\\
\scriptsize{} &\scriptsize{}& \scriptsize{(30,20)}& \scriptsize{W-GLGMRES-D}&\scriptsize{4}&\scriptsize{8.1073e-10} &\scriptsize{391.9157}\\
\end{tabular}
 \end{table}

\begin{figure}
\centering
  \includegraphics[width=5cm]{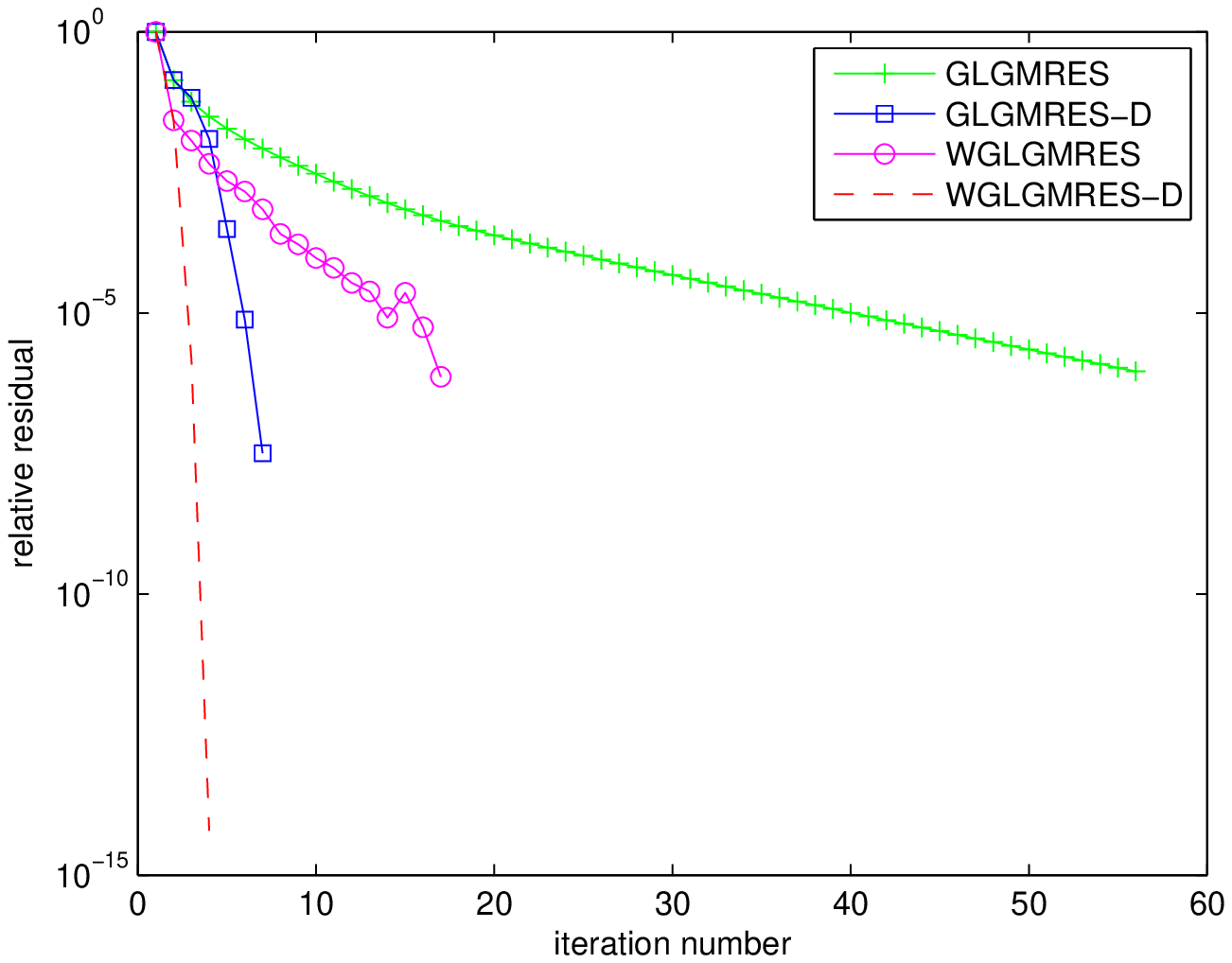}
  \qquad
 \includegraphics[width=5cm]{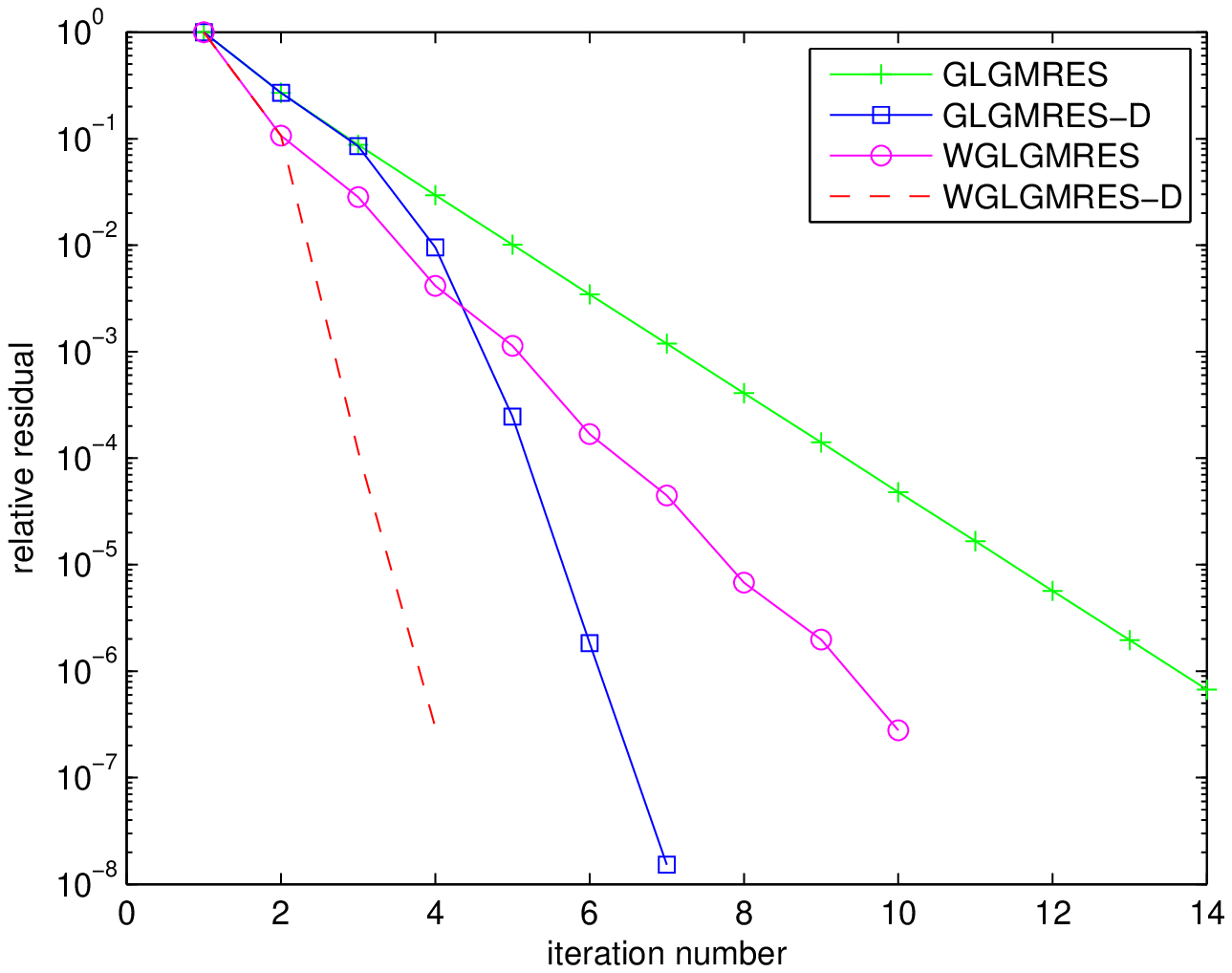}
 \caption{Example 3: Convergence curves of the four algorithms. Left: $A=sherman4$, Right: $A=add32$.  }\label{figure4}
  \end{figure}
\begin{figure}
\centering
\includegraphics[width=4cm]{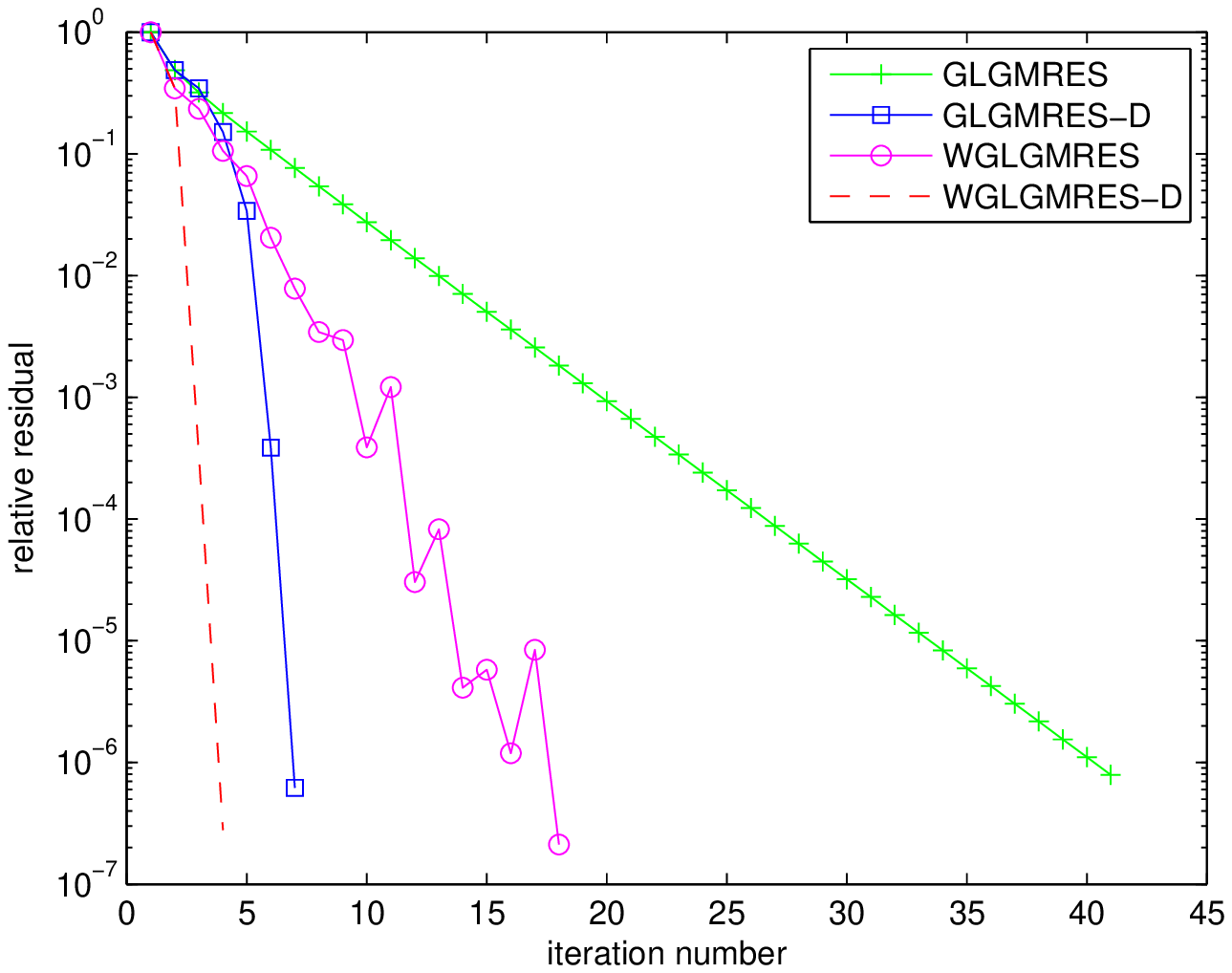}
 \quad
\includegraphics[width=4cm]{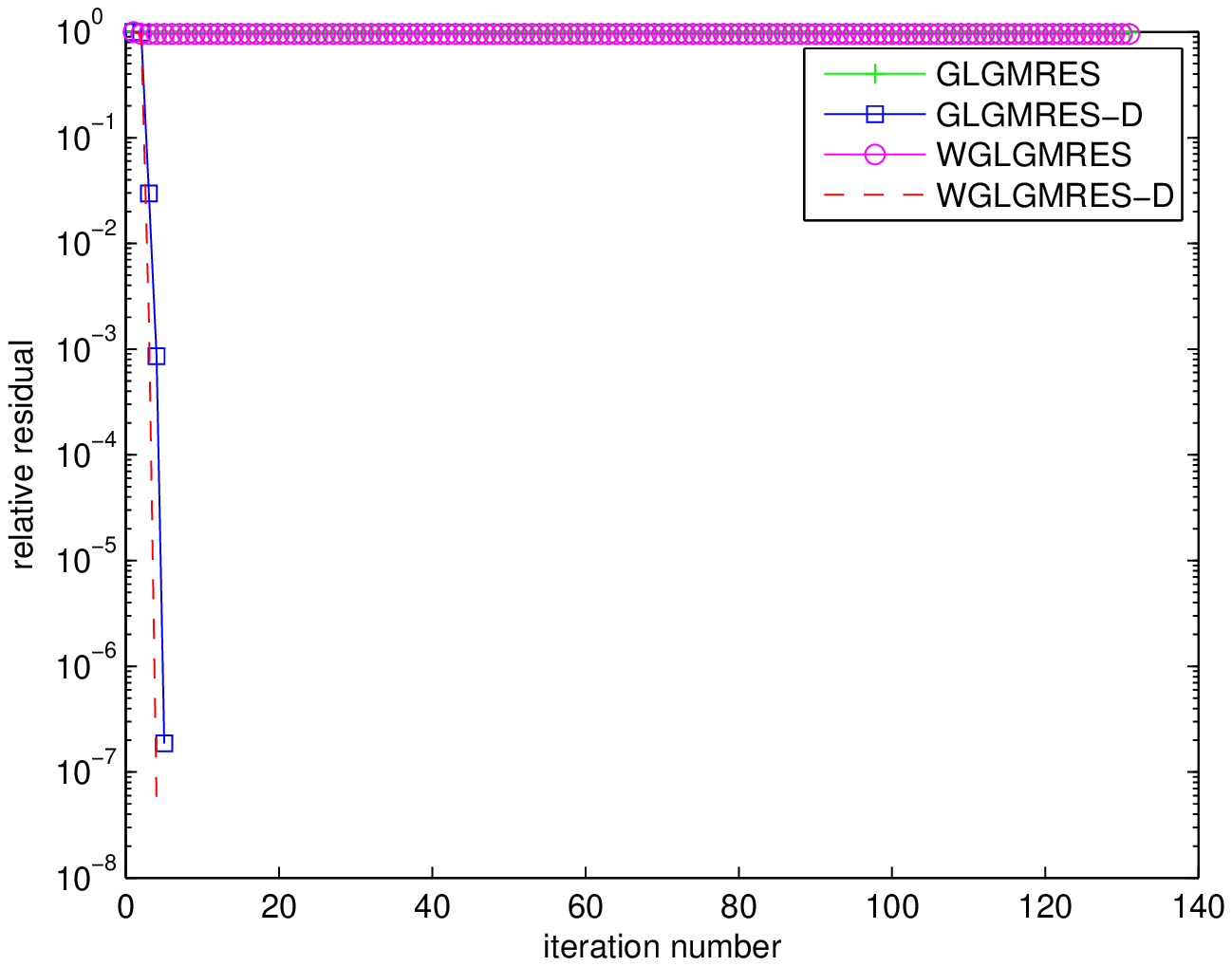}
\quad
\includegraphics[width=4cm]{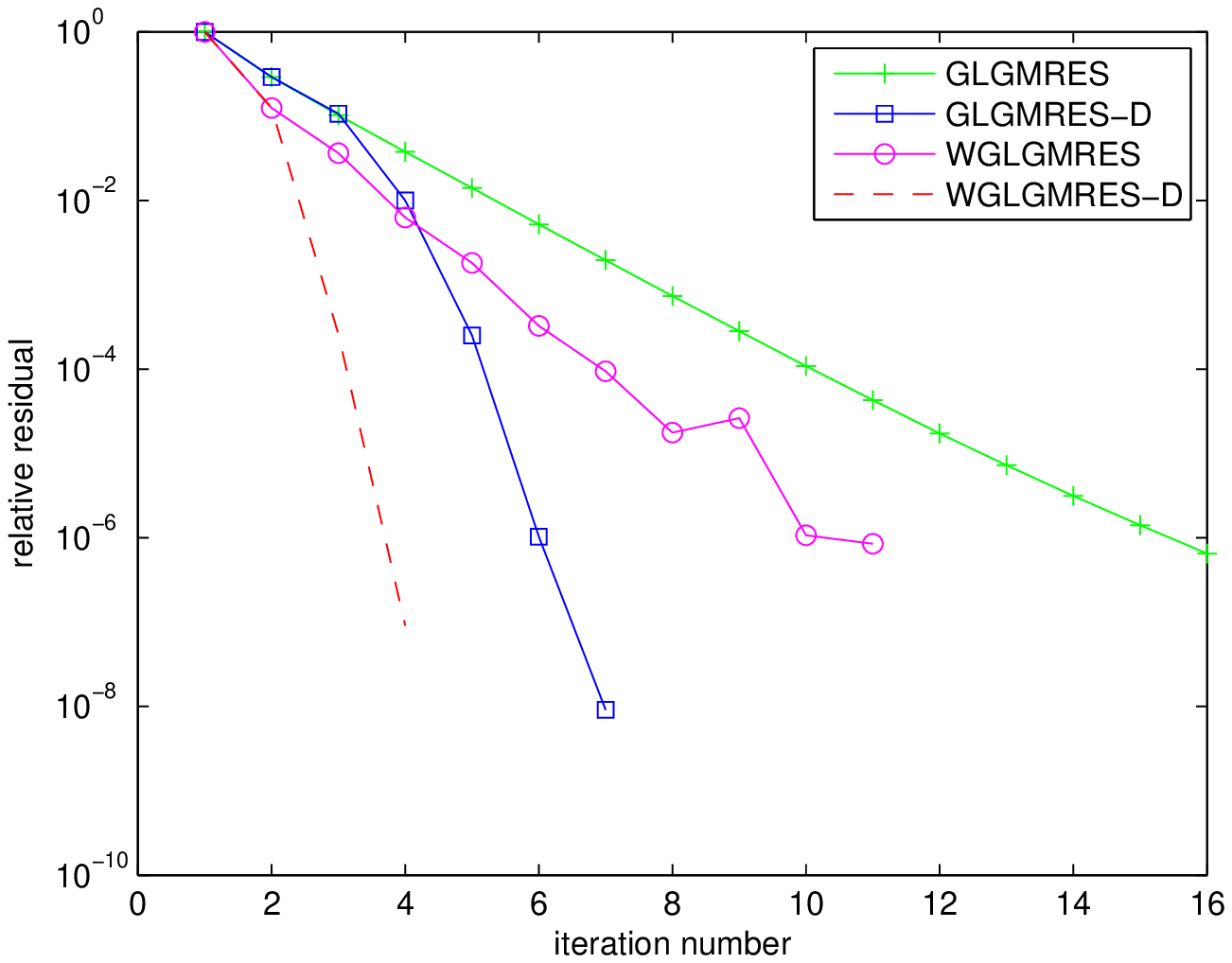}
 \caption{Example 3: Convergence curves of the four algorithms. Left: $A=saylr4$, middle: $A=sherman2$, Right: $A=pde2961$.  }\label{figure5}
  \end{figure}
\medskip
{\bf Example 4. \label{exp4}}~{
In this example, we combine the weighted and deflated strategy with the flexible preconditoning strategy \cite{Saad}, and show the numerical behavior of the resulting algorithm.
In the flexible preconditioned algorithms, the preconditioner may vary from one step to the next, for more details, refer to \cite{Saad}.
In this example, the flexible preconditioner consists of five steps of full GLGMRES for solving the linear systems with multiple right-hand sides in the inner iterations, and we make use of $D_3$ as the weighting strategy in our new algorithm.

We first compare GLGMRES, W-GLGMRES with their flexible preconditioning versions: flexible global GMRES (F-GLGMRES), and weighted flexible global GMRES (WF-GLGMRES).
Table \ref{tab5} reports the number of iterations, CPU time and residual norm of the four algorithms. It is seen that by combining the weighted and flexible strategies together, we pay fewer iterations and less CPU time, compared with the standard global GMRES algorithm and the weighted global GMRES algorithm, except for Sherman4. Indeed, for this problem, both F-GLGMRES and WF-GLGMRES use fewer iterations than W-GLGMRES, while the CPU time used for the two former algorithms is (much) more than W-GLGMRES.
The reason is that in the flexible algorithms, one has to approximately solve $m=20$ linear systems with $s=400$ right-hand sides per cycle. Note that for Sherman2, both the standard and the weighted flexible global GMRES do not work within 2500 iterations.
Figures \ref{figure6}--\ref{figure7} plot the convergence behavior of WF-GLGMRES and F-GLGMRES  for the four problems.

\begin{table}
 \centering
\caption{Example 4: A comparison of flexible GLGMRES and flexible W-GLGMRES.}\label{tab5}
\begin{tabular}{c|c|c c c }
& & \multicolumn{3}{|c}{ \scriptsize{$m=20$}}  \\
\cline{3-5}
\scriptsize{Problem} & \scriptsize{Algorithm} & \scriptsize{iter} & \scriptsize{res.norm}&  \scriptsize{CPU}  \\
\hline
\scriptsize{} &\scriptsize{GLGMRES}&\scriptsize{60}&\scriptsize{9.2014e-07} &\scriptsize{844.0590}\\
\scriptsize{$A=sherman4$} &\scriptsize{W-GLGMRES} &\scriptsize{18}&\scriptsize{7.8295e-07}& \scriptsize{398.4749}\\
\scriptsize{$B=fdm(\cos(xy),e^{y^2x},100)$} &\scriptsize{}& \scriptsize{} &\scriptsize{}& \scriptsize{}\\
\scriptsize{$n=1104,s=400$} &\scriptsize{F-GLGMRES}& \scriptsize{10} & \scriptsize{3.1852e-07} &\scriptsize{798.0550}\\
\scriptsize{} &\scriptsize{WF-GLGMRES}& \scriptsize{6} & \scriptsize{9.4672e-07}&\scriptsize{521.1000}\\
\hline
\scriptsize{} & \scriptsize{GLGMRES}&\scriptsize{14}&\scriptsize{6.7001e-07} &\scriptsize{1.1432e+03}\\
\scriptsize{$A=add32$} & \scriptsize{W-GLGMRES} &\scriptsize{11} &\scriptsize{2.9613e-07}& \scriptsize{1.0521e+03}\\
\scriptsize{$B=fdm(\sin(xy),e^{xy},10)$} &\scriptsize{}& \scriptsize{} &\scriptsize{}& \scriptsize{}\\
\scriptsize{$n=4960,s=400$} &\scriptsize{F-GLGMRES}& \scriptsize{3} & \scriptsize{7.9173e-07}&\scriptsize{812.2611}\\
\scriptsize{} &\scriptsize{WF-GLGMRES}& \scriptsize{2} & \scriptsize{6.8148e-11}&\scriptsize{520.8404}\\
\hline
\scriptsize{} & \scriptsize{GLGMRES}&\scriptsize{41}&\scriptsize{7.9900e-07} &\scriptsize{1.9829e+03}\\
\scriptsize{$A=saylr4$} &  \scriptsize{W-GLGMRES} &\scriptsize{18}&\scriptsize{6.1862e-07}&\scriptsize{1.3438e+03}\\
\scriptsize{$B=fdm(\sin(xy),e^{xy},10)$} &\scriptsize{}& \scriptsize{} &\scriptsize{}& \scriptsize{}\\
\scriptsize{$n=3564,s=400$} &\scriptsize{F-GLGMRES}& \scriptsize{5} & \scriptsize{4.9185e-08}&\scriptsize{1.2897e+03}\\
\scriptsize{} &\scriptsize{WF-GLGMRES}& \scriptsize{4} & \scriptsize{6.0365e-09}&\scriptsize{1.0238e+03}\\
\hline
\scriptsize{} & \scriptsize{GLGMRES}&\scriptsize{$\uparrow 2500$}& \scriptsize{$\dagger$}  &\scriptsize{-}\\
\scriptsize{$A=sherman2$} & \scriptsize{W-GLGMRES}&\scriptsize{$\uparrow 2500$} &\scriptsize{$\dagger$} &\scriptsize{-}\\
\scriptsize{$B=fdm(\sin(xy),e^{xy},10)$} &\scriptsize{}& \scriptsize{} &\scriptsize{}& \scriptsize{}\\
\scriptsize{$n=1080,s=400$} &\scriptsize{F-GLGMRES}& \scriptsize{$\uparrow 2500$}& \scriptsize{$\dagger$} & \scriptsize{-}\\
\scriptsize{} &\scriptsize{WF-GLGMRES}& \scriptsize{$\uparrow 2500$} & \scriptsize{$\dagger$}& \scriptsize{-}\\
\hline
\scriptsize{} & \scriptsize{GLGMRES}&\scriptsize{16}& \scriptsize{7.8910e-07}  &\scriptsize{755.9652}\\
\scriptsize{$A=pde2961$}&\scriptsize{W-GLGMRES}&\scriptsize{11}&\scriptsize{5.1991e-07}& \scriptsize{ 661.4130}\\
\scriptsize{$B=fdm(\sin(xy),e^{xy},10)$} &\scriptsize{}& \scriptsize{} &\scriptsize{}& \scriptsize{}\\
\scriptsize{$n=2961,s=400$} &\scriptsize{F-GLGMRES}& \scriptsize{4} & \scriptsize{7.9803e-08}&\scriptsize{777.3842}\\
\scriptsize{} &\scriptsize{WF-GLGMRES}& \scriptsize{3} & \scriptsize{6.1608e-08}&\scriptsize{573.7325}\\
\end{tabular}
\end{table}

\begin{figure}
\centering

  \includegraphics[width=5cm]{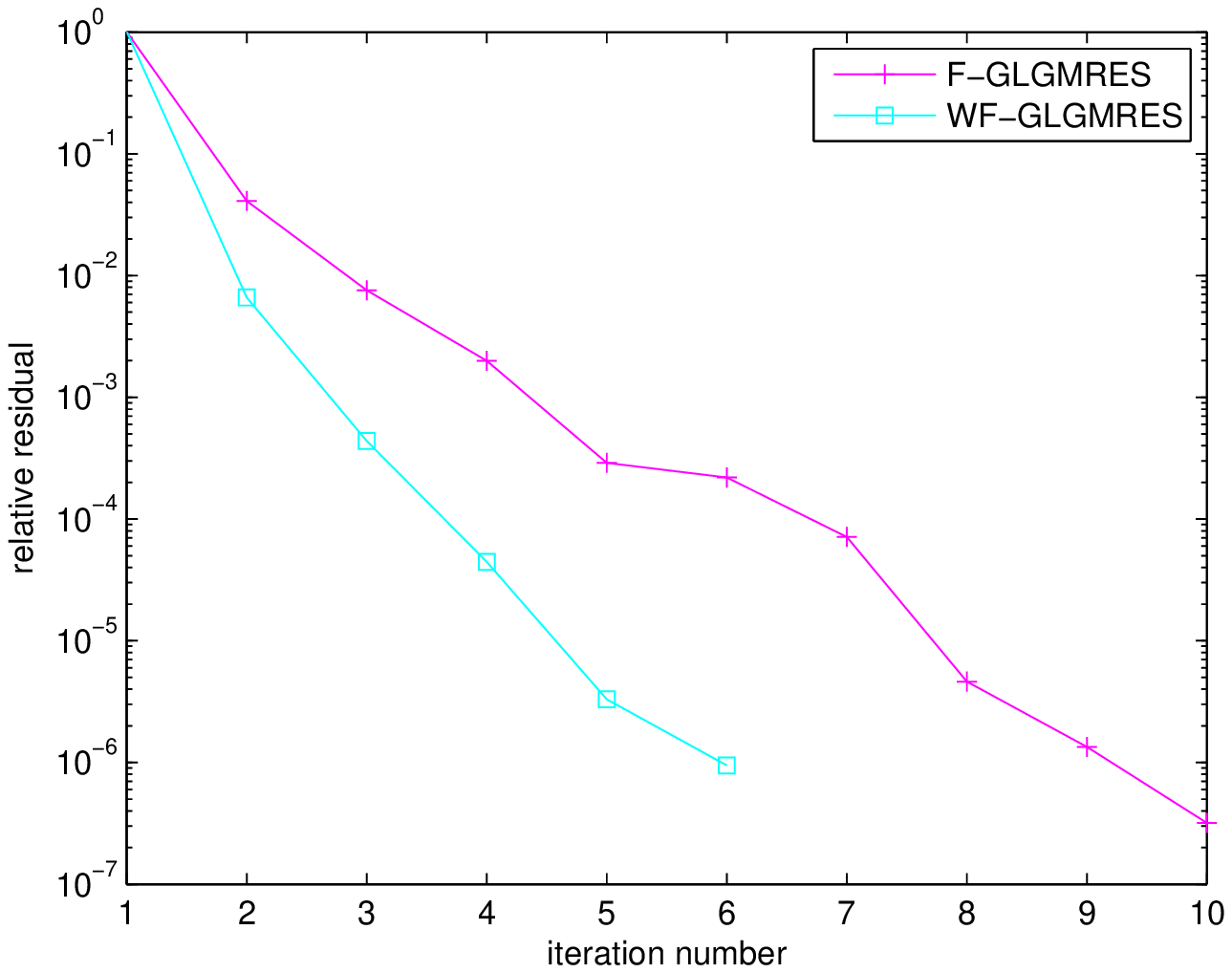}
  \quad
  \includegraphics[width=5cm]{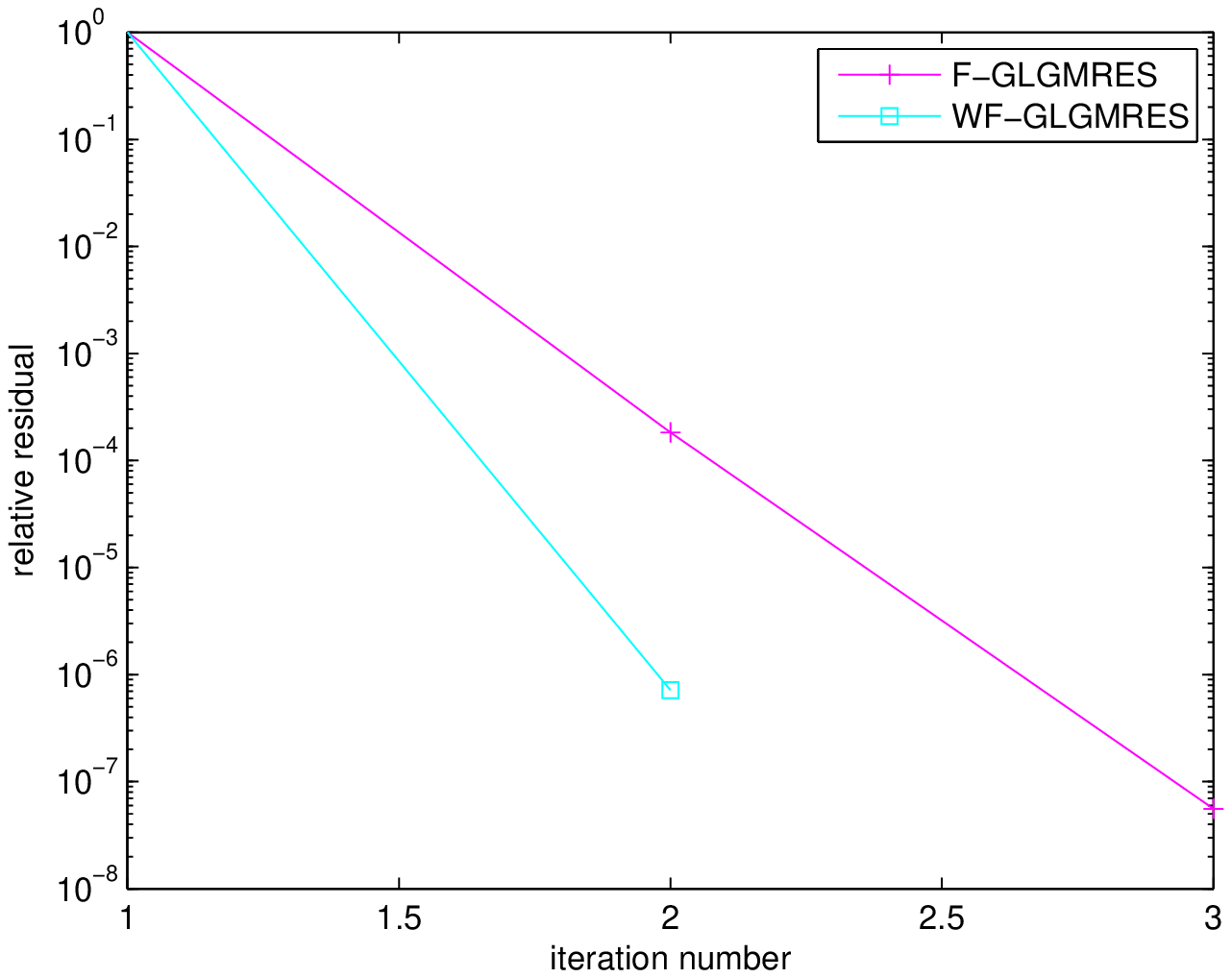}
 \caption{Example 4: Convergence curves of the flexible global GMRES and the weighted flexible global GMRES algorithms. Left: $A=sherman4$, Right: $A=add32$.  }\label{figure6}
\end{figure}
\begin{figure}
\centering

\includegraphics[width=5cm]{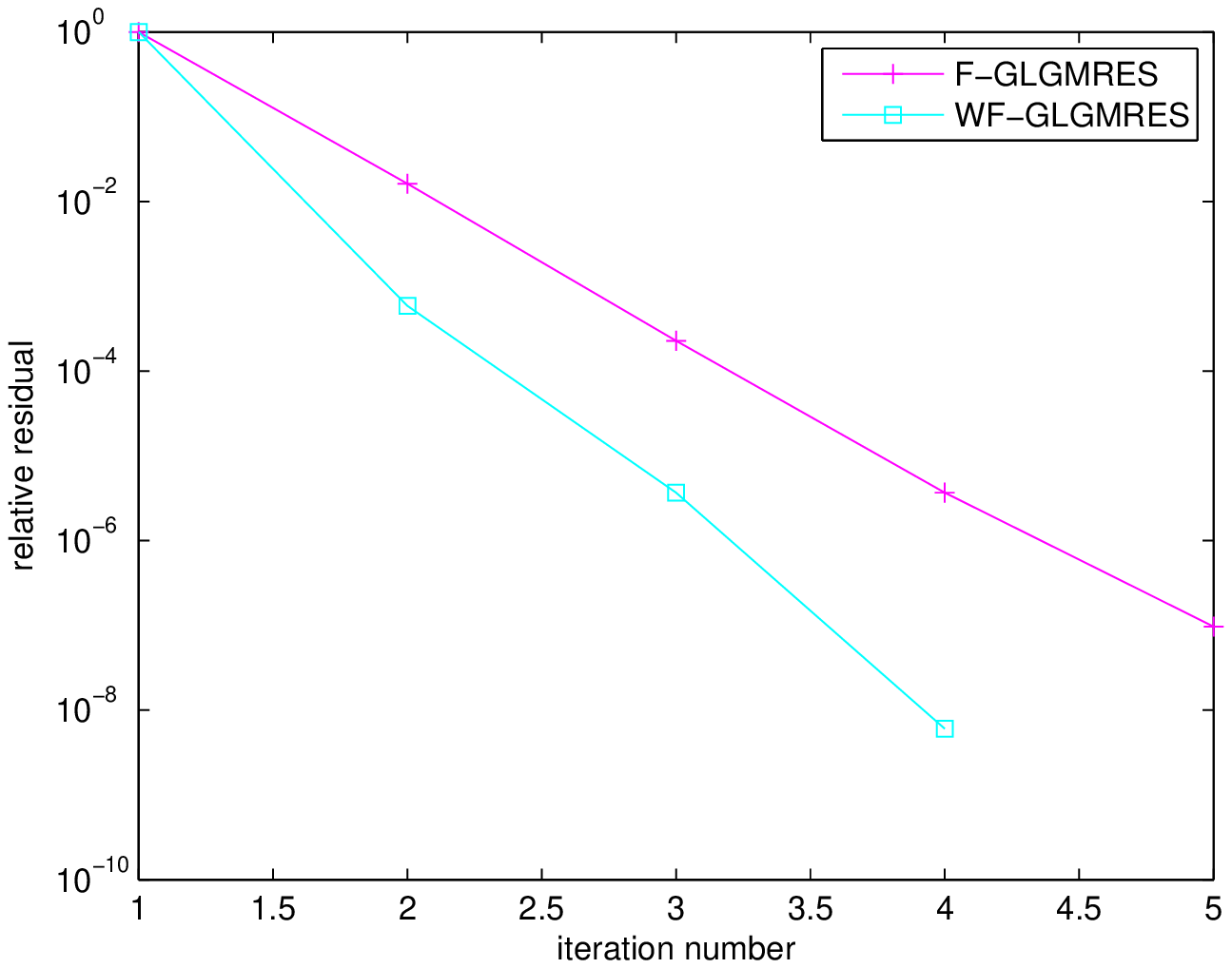}
  \quad
\includegraphics[width=5cm]{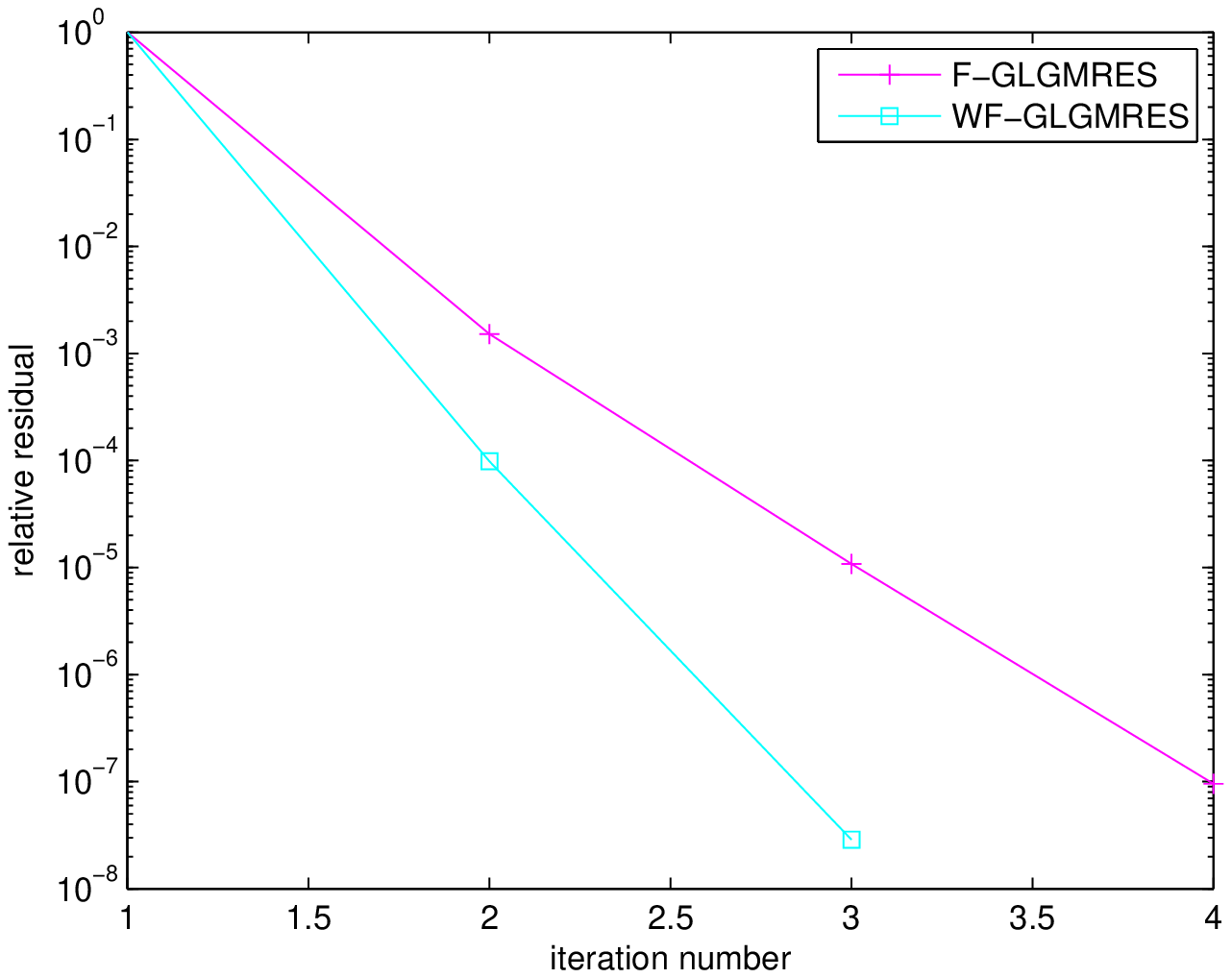}

 \caption{Example 4: Convergence curves of the flexible global GMRES and the weighted flexible global GMRES algorithms. Left: $A=saylr4$, Right: $A=pde2961$.  }\label{figure7}
\end{figure}

Next, we compare the weighted flexible global GMRES with deflation with flexible global GMRES with deflation \cite{Giraud} for solving the five problems, where
$m=20,k=15$ is used. Table \ref{tab6} lists the numerical results. Figures \ref{figure8}--\ref{figure9} plot the convergence curves of the two algorithms during iterations.
Again, it is obvious to see that the weighted algorithm is better than the standard one in terms of iteration numbers and CPU time.
Compared with the numerical results given in Table \ref{tab4}, we find that the flexible and deflated algorithms often need fewer iterations than the deflated versions, however, the CPU time of the former can be much more than the latter. As we have pointed out before, this is due to the fact that the inner iterations bring us a large amount of computational overhead. How to reduce the high cost from inner iterations is beyond the scope of this paper, but deserves further investigation.

Moreover, the two flexible and deflated algorithm still do not work for Sherman2, just like the bare flexible algorithms. One reason is that only five steps of full GLGMRES for solving the linear systems in the inner iterations is not enough for this problem.
Thus, we suggest to use deflated global GMRES when $s$, the number of columns of $C$, is large, say, more than one hundred. On the other hand, when $s$ is of medium-sized, we recommend to use the flexible and deflated global GMRES algorithm.

\begin{table}
\centering
\caption{Example 4: A comparison of flexible W-GLGMRES-D and flexible GLGMRES-D.}\label{tab6}
\begin{tabular}{c|c|c c c}
& & \multicolumn{3}{|c}{ \scriptsize{$m=20,k=15$}}  \\
\cline{3-5}
\scriptsize{Problem} &\scriptsize{Algorithm}&\scriptsize{iter} &\scriptsize{res.norm} & \scriptsize{CPU}\\
\hline
\scriptsize{$A=sherman4$} & \scriptsize{F-GLGMRES-D}&\scriptsize{7}&\scriptsize{8.39783e-08}& \scriptsize{513.5553}\\
\scriptsize{$B=fdm(\cos(xy),e^{y^2x},100)$} & \scriptsize{}&\scriptsize{}&\scriptsize{}& \scriptsize{} \\
\scriptsize{$n=1104,s=400$} &  \scriptsize{WF-GLGMRES-D}& \scriptsize{5}&\scriptsize{8.0266e-07}& \scriptsize{ 370.8139}\\
\hline
\scriptsize{$A=add32$} & \scriptsize{F-GLGMRES-D}&\scriptsize{4}&\scriptsize{1.0656e-08}&\scriptsize{1.2597e+03}\\
\scriptsize{$B=fdm(\sin(xy),e^{xy},10)$} & \scriptsize{}&\scriptsize{}&\scriptsize{}& \scriptsize{} \\
\scriptsize{$n=4960,s=400$} & \scriptsize{WF-GLGMRES-D}&\scriptsize{3}& \scriptsize{8.7016e-12} & \scriptsize{928.9309} \\
\hline
\scriptsize{$A=saylr4$} & \scriptsize{F-GLGMRES-D}&\scriptsize{5}&\scriptsize{5.9416e-08}&\scriptsize{1.0732e+03}\\
\scriptsize{$B=fdm(\sin(xy),e^{xy},10)$} & \scriptsize{}&\scriptsize{}&\scriptsize{}& \scriptsize{} \\
\scriptsize{$n=3564,s=400$} & \scriptsize{WF-GLGMRES-D}&\scriptsize{3} &\scriptsize{2.1611e-07} &\scriptsize{640.1040}\\
\hline
\scriptsize{$A=sherman2$} & \scriptsize{F-GLGMRES-D}&\scriptsize{$\uparrow 2500$}& \scriptsize{$\dagger$} & \scriptsize{-}\\
\scriptsize{$B=fdm(\sin(xy),e^{xy},10)$} & \scriptsize{}&\scriptsize{}&\scriptsize{}& \scriptsize{} \\
\scriptsize{$n=1080,s=400$} & \scriptsize{WF-GLGMRES-D}&\scriptsize{$\uparrow 2500$}& \scriptsize{$\dagger$} & \scriptsize{-}\\
\hline
\scriptsize{$A=pde2961$} & \scriptsize{F-GLGMRES-D}&\scriptsize{5}& \scriptsize{2.5480e-08} & \scriptsize{887.0873}\\
\scriptsize{$B=fdm(\sin(xy),e^{xy},10)$} & \scriptsize{}&\scriptsize{}&\scriptsize{}& \scriptsize{} \\
\scriptsize{$n=2961,s=400$} & \scriptsize{WF-GLGMRES-D}&\scriptsize{3}&\scriptsize{4.3535e-07} & \scriptsize{530.0198}\\
\end{tabular}
\end{table}

\begin{figure}
\centering
 \includegraphics[width=5cm]{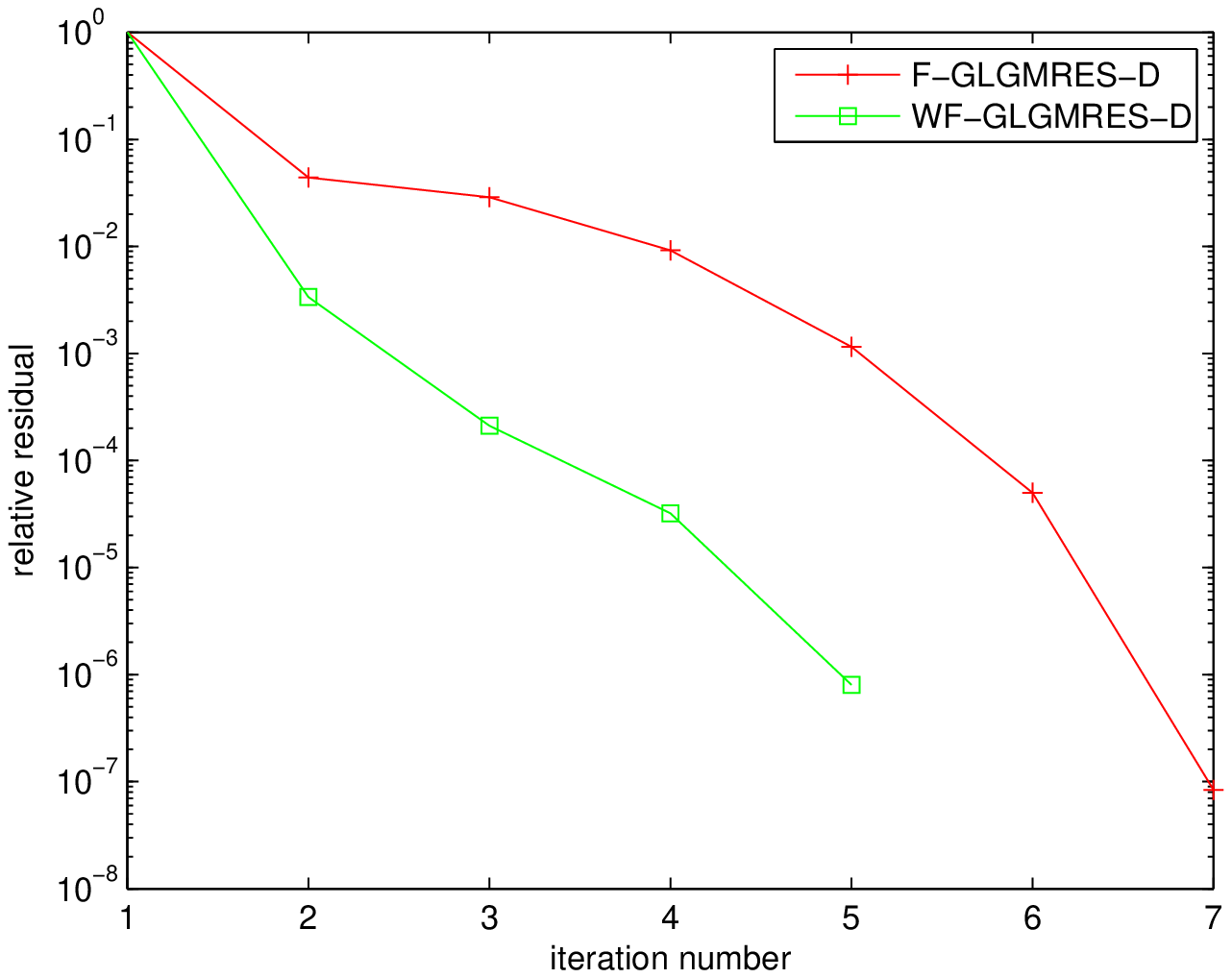}
  \quad
   \includegraphics[width=5cm]{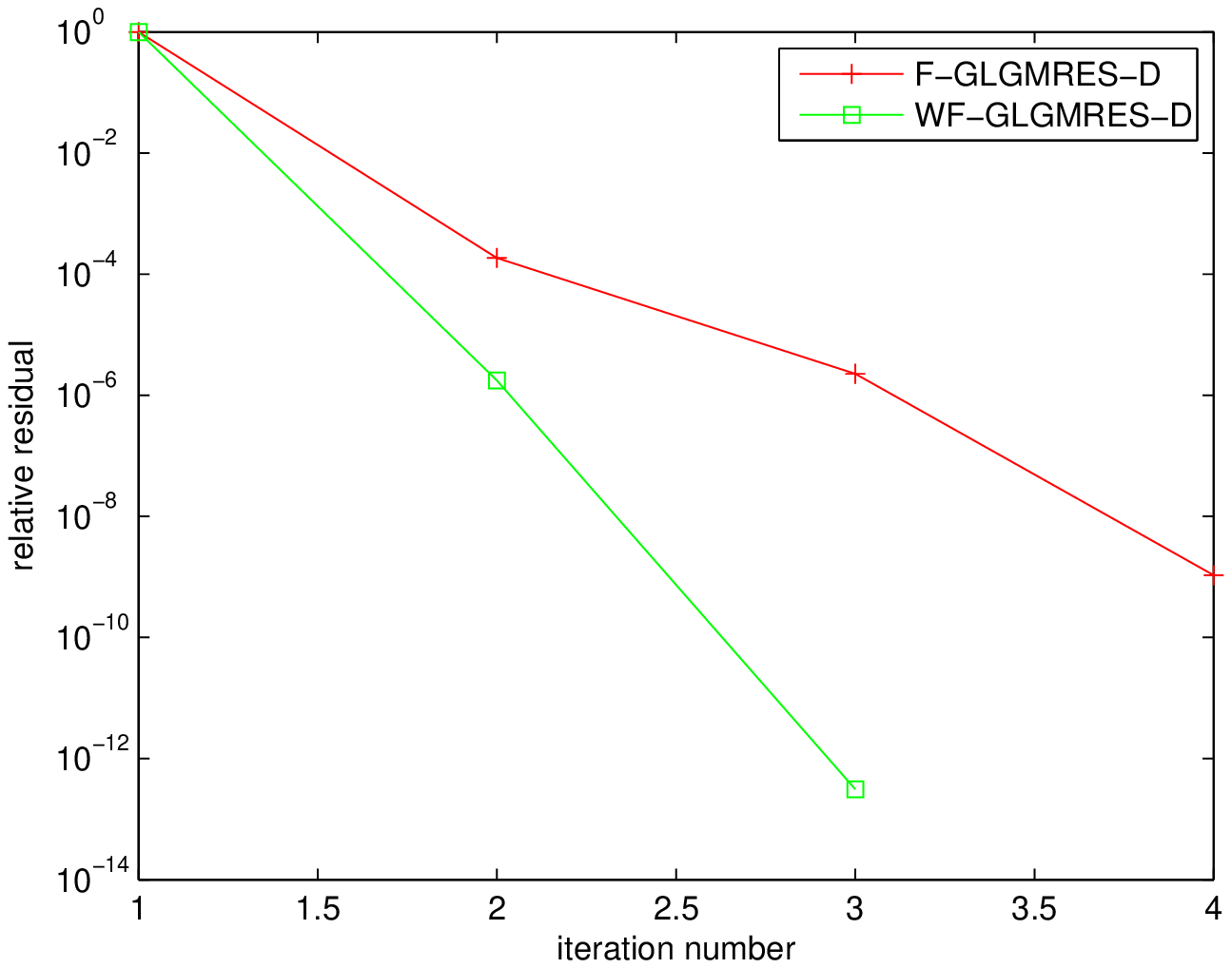}
 \caption{Example 4: Convergence curves of WF-GLGMRES-D and F-GLGMRES-D. Left: $A=sherman4$, Right: $A=add32$.  }\label{figure8}
\end{figure}
\begin{figure}
\centering
 \includegraphics[width=5cm]{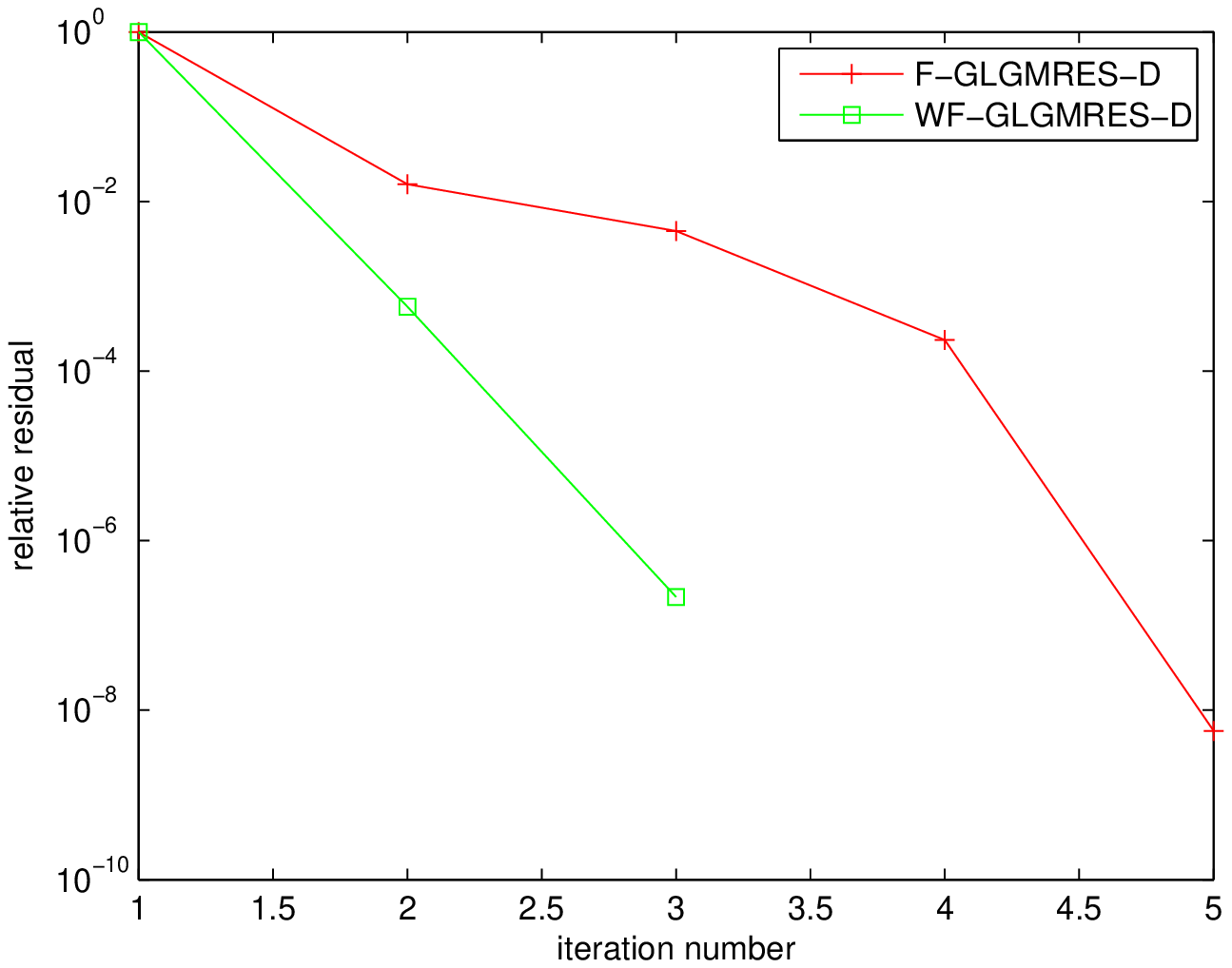}
  \quad
 \includegraphics[width=5cm]{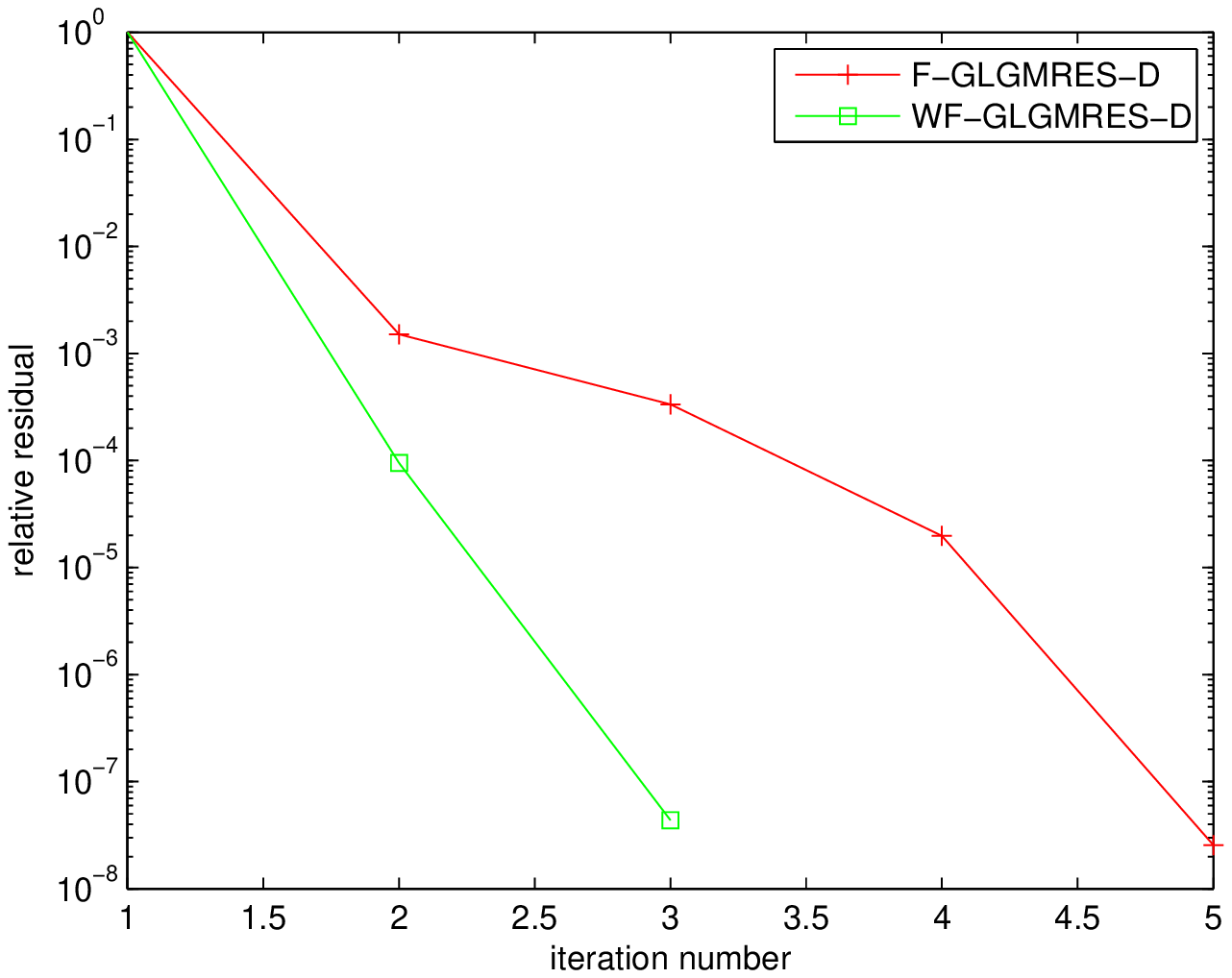}
 \caption{Example 4: Convergence curves of WF-GLGMRES-D and F-GLGMRES-D. Left: $A=saylr4$,  Right: $A=pde2961$.  }\label{figure9}
\end{figure}

\section{Conclusion}
The global GMRES algorithm is popular for large Sylvester matrix equations. The weighting strategy can improve the algorithm by alienating the eigenvalues that obstacle the convergence. However, the optimal choice of the weighting matrix is still an open problem and needs further investigation. Moreover, due to the growth of memory requirements and computational
cost, it is necessary to restart the algorithm efficiently. 

The contribution of this work is three-fold. First, we present three
new schemes based on residual to update the weighting matrix during iterations, and propose a weighted global GMRES algorithm. Second, we apply the deflated restarting strategy to the weighted algorithm, and propose a weighted global GMRES algorithm with deflation for solving large Sylvester matrix equations. Third, we show that in the weighted and deflated global GMRES algorithm, there is no need to change the inner product with respect to diagonal matrix to that with non-diagonal matrix, and our scheme is much cheaper than the one proposed in weighted GMRES-DR algorithm \cite{Embree}.
Further, we consider other acceleration technology such as the flexible preconditioning strategy. For the weighted flexible global GMRES algorithm with deflation, it is interesting to reduce the high cost from inner iterations, and it is definitely a part of our future work.

\end{document}